\documentclass[a4paper]{article}
\usepackage{mathrsfs}
\usepackage{latexsym,bm}
\usepackage{amssymb,amsmath,amsthm}
\usepackage[english]{babel}
\usepackage{dsfont}
\usepackage{titlesec}
\usepackage{enumerate}
\usepackage[colorlinks=true]{hyperref}
\usepackage{graphicx}
\usepackage{color}
\usepackage{appendix}
\usepackage{titlesec}
\usepackage{mathrsfs}
\allowdisplaybreaks[4]

\newtheorem{Theorem}{Theorem}[section]
\newtheorem{Lemma}[Theorem]{Lemma}
\newtheorem*{Lemma*}{Lemma}
\newtheorem*{LemmaA}{Lemma A}

\newtheorem{Corollary}[Theorem]{Corollary}

\newtheorem{Remark}[Theorem]{Remark}
\numberwithin{equation}{section}

\hypersetup{linkcolor=blue,urlcolor=red,citecolor=red}

\newcommand{\lc}
{\mathrel{\raise2pt\hbox{${\mathop<\limits_{\raise1pt\hbox
					{\mbox{$\sim$}}}}$}}}

\newcommand{\gc}
{\mathrel{\raise2pt\hbox{${\mathop>\limits_{\raise1pt\hbox{\mbox{$\sim$}}}}$}}}

\newcommand{\ec}
{\mathrel{\raise2pt\hbox{${\mathop=\limits_{\raise1pt\hbox{\mbox{$\sim$}}}}$}}}

\def\bb{\begin{equation}} \def\ee{\end{equation}}

\def\beqn{\begin{eqnarray}}  \def\eqn{\end{eqnarray}}

\def\bbx{\begin{equation*}}   \def\eex{\end{equation*}}

\def\beqnx{\begin{eqnarray*}} \def\eqnx{\end{eqnarray*}}

\def\bd{\begin{description}} \def\ed{\end{description}}

\def \d  {\,\mathrm{d}}

\begin{document}

	\title{Time optimal control for the heat equation with inverse-square potentials }

	\author{
		Le Lu\thanks{ Corresponding author, School of Mathematics and Statistics, Wuhan
			University, Wuhan, 430072, China (2026102010046@whu.edu.cn).}
		\quad \quad
		Ke Wu\thanks{ School of Mathematics and Statistics, Wuhan
			University, Wuhan, 430072, China (kirky\_2023@foxmail.com).}
		\quad \quad
		Can Zhang\thanks{ School of Mathematics and Statistics, Wuhan University, Wuhan, 430072, China (canzhang@whu.edu.cn).}
	}

	\date{}
	\maketitle
	
	\begin{abstract}
		This paper studies the time optimal control problem for the heat equation with singular inverse-square potentials under the Hardy critical condition. We first establish an observability inequality for the singular parabolic equation from general space-time measurable sets of positive Lebesgue measure. Rather than depending on the still-unknown Lebeau-Robbiano spectral inequality for the underlying singular operator, our proof combines Carleman-based observability results over open cylinders, real-analyticity estimates of solutions away from the singular origin, propagation of smallness estimates for real-analytic functions, and a telescoping-series technique. Using this observability result, we derive the null-controllability with controls supported on measurable subsets. Finally, we prove that the corresponding time optimal control is unique and obeys the bang-bang property almost everywhere over the control domain.

	\end{abstract}
	
	\textbf{Keywords.}
	Observability inequality, time optimal control, bang-bang property, inverse-square potential, singular heat equation.
	\vskip 3pt
	\textbf{AMS Subject Classifications. 93B07, 35k05}

	\section{Introduction and the main result}
	$\;\;\;\;$ Let $T>0$ and $\Omega\subset\mathbb{R}^n$ ($n\geq3$) be a bounded domain such that $0\in\Omega$, with $C^2$-smooth boundary $\partial\Omega$. The objective of this paper is to study the bang-bang property and the uniqueness of time optimal controls for the following singular heat equation with the inverse-square potentials:
	\begin{equation}\label{eq}
		\begin{cases}
			\partial_t u-\Delta u-\frac{\mu}{|x|^2}u=0,\;\;&\text{in}\;\;\Omega
			\times(0,T),\\
			u=0,\;\;&\text{on}\;\;\partial\Omega\times(0,T),\\
			u(\cdot,0)=u_0,\;\;&\text{in}\;\;\Omega,
		\end{cases}
	\end{equation}
	with $u_0\in L^2(\Omega)$ and $\mu\leq\mu^*\triangleq(n-2)^2/4$.
	The inverse-square potential arises naturally in both quantum mechanics and  combustion theory (see, e.g., \cite{VZ}). It was also proved in \cite{VZ} that  the critical value $\mu^*$ determines the well-posedness of Equation \eqref{eq}. In fact, this problem is closely related to the following classical Hardy inequality
	\begin{equation*}
		\mu^*\int_{\Omega}\frac{u^2}{|x|^2}\,dx
		\leq\int_{\Omega}|\nabla u|^2\,dx,\;\;
		\forall u\in H^1_0(\Omega).
	\end{equation*}
	More precisely, let $H_\mu(\Omega)$, $\mu\leq\mu^*$, be the Hilbert space obtained as the completion of $H_0^1(\Omega)$ with the norm
	$$\|u\|_{H_\mu(\Omega)}\triangleq\Big(\int_\Omega\Big[|\nabla u|^2-\frac{\mu}{|x|^2}u^2\Big]\d x\Big)^{1/2}.$$
	Then for each $\mu\leq \mu^*$, the unbounded operator $A_\mu$ defined by 
	\begin{equation}\label{Amu}
		A_\mu u\triangleq-\Delta u-\frac{\mu}{|x|^2}u,
	\end{equation}
	with the domain $D(A_\mu)\triangleq\big\{u\in H_\mu(\Omega):\;A_\mu u\in L^2(\Omega)\big\},$ generates an analytic semigroup of contractions in $L^2(\Omega)$.
	It is worth pointing out that if $\mu>\mu^*$, then Equation~\eqref{eq} has no solution when $u_0>0$ even locally in time (see \cite{BG} and \cite{VZ}).

	The null-controllability of systems with singular inverse-square potentials has attracted considerable attention. For an interior singularity, it was proved in \cite{VZ1} that Equation~\eqref{eq} can be controlled to zero with a distributed control which surrounds the singularity. Subsequently, the author of \cite{E} removed this restriction and established that Equation~\eqref{eq} can be controlled from any open subset. When the singularity is located on the boundary, it was shown in \cite{Caz} that the null-controllability property is determined by the critical threshold $\mu\leq n^2/4$. The authors of \cite{BZZ} discussed the case of a singular inverse-square potential involving the distance to the boundary. Other related works have also studied the case of variable coefficients (see \cite{XQS}) and multiple distinct singularities (see \cite{XQ}).
	
	The time optimal control problems are also of interest. Such problems for the heat equation with internal and boundary controls were considered in \cite{KW2} and \cite{Micu}, respectively. The authors of \cite{PWZ2} investigated the time optimal control problem for the semilinear heat equation. The time optimal control problem for the Stokes system was studied in \cite{csz}. The work in \cite{wz} considered time optimal control problems for some abstract evolution equations. General frameworks for time optimal control problems of linear evolution equations were developed in \cite{F1} and \cite{wwxz}.
	
	The corresponding time optimal control problem is formulated as follows. Let $\omega\subset\Omega$ be a non-empty open subset. Given any $M>0$, we define the following control constraint set:
	\begin{equation*}
		\mathcal{U}^M=\big\{f\in L^\infty(\Omega\times\mathbb{R}^+):\;
		|f(x,t)|\leq M\;\;\text{for a.e.}\;\;(x,t)\in\Omega\times\mathbb {R}^+\big\}.
	\end{equation*}
	Consider the time optimal control problem
	$$(TP)^M:\;\;T^*(M)\triangleq\inf_{f\in \mathcal{U}^M}\big\{t>0:\;u(\cdot,t;\chi_{\omega}f)
	=0\;\;\text{in}\;\;\Omega\big\},$$
	where $u(\cdot,\cdot\,;\chi_\omega f)$ satisfies the following controlled equation
	\begin{equation*}
		\begin{cases}
			\partial_t u-\Delta u-\frac{\mu}{|x|^2}u=\chi_\omega f,\;\;&\text{in}\;\;
			\Omega\times\mathbb{R}^+,\\
			u=0,\;\;&\text{on}\;\;\partial\Omega\times\mathbb{R}^+,\\
			u(\cdot,0)=u_0,\;\;&\text{in}\;\;
			\Omega,
		\end{cases}
	\end{equation*}
	with $\mu\leq\mu^*$ and $u_0\in L^2(\Omega)$.
	We call $T^*(M)$ the optimal time for $(TP)^M$. A control $f^*\in\mathcal{U}^M$ is called a time optimal control if its state satisfies $u(\cdot, T^*(M); \chi_\omega f^*)=0$. The main result of this paper is the following:
	
	\begin{Theorem}\label{ITP}
		Problem $(TP)^M$ has a unique time optimal control $f^*$. Moreover, it has the bang-bang property: $| f^*(x ,t) | =M\;\;\text{for almost every}\;\;(x,t)\in \omega\times(0,T^*(M)).$
		
	\end{Theorem}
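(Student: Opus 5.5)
The proof follows the standard route for bang-bang results (Wang, Phan–Wang–Zhang). Its key ingredient is the observability inequality from measurable sets announced in the abstract. Precisely, I would first establish: for every $T>0$ and every measurable $E\subset\omega\times(0,T)$ of positive measure, there is $C=C(T,E)>0$ such that every solution $\varphi$ of the adjoint (self-adjoint) backward equation $-\partial_t\varphi+A_\mu\varphi=0$, $\varphi(T)=\varphi_T\in L^2(\Omega)$, satisfies
\[
\|\varphi(0)\|_{L^2(\Omega)}\le C\int_E|\varphi(x,t)|\,\d x\,\d t .
\]
I take this as the main input, proved as the abstract describes. One starts from Carleman-based observability on open cylinders $\omega'\times(a,b)$, with $\omega'$ away from $0$ (Ervedoza's result). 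Next one uses quantitative analyticity in $x$ of $\varphi(\cdot,t)$ on compact subsets of $\Omega\setminus\{0\}$, and in $t$ from the analytic semigroup. Propagation of smallness from measurable sets (Vessella/Nadirashvili type) then gives an interpolation inequality $\|\varphi(t)\|\le (C\int_{E_t}|\varphi|)^{\theta}\|\varphi(s)\|^{1-\theta}$. Finally, a telescoping series over a sequence $l_m\to$ a density point of $\{t:|E_t|>0\}$ converts interpolation into observability.

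\textbf{Existence.} By duality (Hahn–Banach, or minimizing $J(\varphi_T)=\frac12\|\ldots\|$), the $L^1$-observability gives null-controllability on $(0,T)$ with controls $\chi_E f$, $\|f\|_{L^\infty}\le C(T,E)\|u_0\|$. For existence of some admissible time, I would let the semigroup decay first (contractions, indeed exponential decay since the first eigenvalue of $A_\mu$ is positive by Hardy with $\mu\le\mu^*$ plus Poincaré) and then apply control on $(T_0,T_0+1)$ with a bound below $M$. Existence of an optimal control then follows by taking a minimizing sequence $t_k\downarrow T^*$ and controls $f_k$, extracting a weak* limit in $L^\infty$, and passing to the limit in the mild formulation. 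Weak continuity of $f\mapsto u(t;\chi_\omega f)$ and continuity in $t$ ensure $u(T^*;\chi_\omega f^*)=0$. One checks $T^*>0$ when $u_0\neq0$.

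\textbf{Bang-bang.} Suppose $E=\{(x,t)\in\omega\times(0,T^*):|f^*|<M-\varepsilon\}$ has positive measure for some $\varepsilon>0$. Fix a small $\delta>0$ such that $E_\delta=E\cap(\omega\times(\delta,T^*))$ still has positive measure. The state $u^*(\delta)$ is steered to $0$ at $T^*$ by $f^*$. I would perturb: pick $\eta>0$ and consider the initial datum $u^*(\delta)$ replaced, via linearity, by steering to zero at the earlier time $T^*-\eta$. This is done by constructing a correction control $g$ supported in $E_\delta$ with $\|g\|_\infty\le C\,\|\text{defect}\|$, where the defect is the state error that tends to $0$ as $\eta\to0$. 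Concretely I use the standard trick: compare $u(\cdot;f^*)$ with the time-shifted control $f^*(\cdot+\eta)$ from the datum $u^*(\eta)$. Alternatively, take $\tilde u_0=u_0$ and control $f^*$ with state difference $e^{-\eta A}$-type errors. The observability constant on $E_\delta$ shifted by $\eta$ stays uniformly bounded for small $\eta$, via measure continuity. The correction then fits into the slack $\varepsilon$ and gives a control in $\mathcal U^M$ reaching $0$ before $T^*$, a contradiction. Thus $|f^*|=M$ a.e. on $\omega\times(0,T^*)$.

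\textbf{Uniqueness.} If $f_1,f_2$ are optimal, $(f_1+f_2)/2$ is admissible and drives the state to zero at $T^*$ by linearity, so it is optimal. By bang-bang, $|f_1+f_2|=2M$ a.e. while $|f_i|\le M$, which forces $f_1=f_2$ a.e. on $\omega\times(0,T^*)$.

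The main obstacle is the measurable-set observability inequality, specifically the real-analyticity estimates near the singularity. I expect to handle this by working only on $\omega\setminus B_r(0)$, which still carries positive measure of $E$, and applying the known open-set observability with the observation region placed away from $0$.
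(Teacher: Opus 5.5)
Your proposal is correct and follows essentially the same route as the paper. The measurable-set observability inequality (Carleman observability on cylinders away from $0$, analyticity, propagation of smallness, telescoping) gives $L^\infty$ null-controllability by Hahn--Banach duality. Bang-bang follows by restarting from $u_0$ at a small time $\eta$ with $f^*$ plus a correction supported on the slack set, of size at most $C\|u_0-u^*(\eta)\|_{L^2(\Omega)}\to0$, and then shifting time; uniqueness follows from the midpoint control.

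One fix is needed. The uniformity in $\eta$ of the correction constant comes not from ``measure continuity'' but from time-translation invariance and contractivity. As the paper does, let the defect evolve freely on $(\eta,\delta)$ and then solve the correction problem on the fixed window $(\delta,T^*)$ with the fixed set $E_\delta$; the constant is then literally independent of $\eta$.
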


	Our strategy for proving Theorem \ref{ITP} relies crucially on an observability inequality from measurable sets (similar arguments can be found in, e.g., \cite{EZ,KW2,MSE,W}). For parabolic equations, observability inequalities from open cylinders are well understood via Carleman estimates (see, e.g., \cite{FI}, \cite{LR} and the references therein). For the singular heat equation with an inverse-square potential, global Carleman estimates were established in \cite{VZ1} (via spherical harmonics) and later improved in \cite{E} (via a direct weight construction).

	When the observation region is a space-time measurable subset of positive measure, the situation is more delicate. For standard heat equations, observability inequalities from measurable sets were obtained in \cite{PW2} and \cite{AEWZ}. Such observability inequalities were also established for some other equations, for example, the Stokes system (e.g., \cite {csz}) and some abstract evolution equations (e.g., \cite{wz}).

	However, for the singular equation \eqref{eq}, few results are known: the observation region $\omega\times E$ with $\omega$ open and $E$ of positive measure was considered in \cite{ZM}, while  the one-dimensional case with the observation region $\omega\times(0,T)$ (where $\omega$ is of positive measure) was treated in \cite{LL}.
	
	In this paper, we extend the observability inequality on space-time measurable sets from the standard heat equation to the singular equation \eqref{eq} by following \cite{LZ2}. Unlike \cite{AEWZ}, our starting point is the classical open set observability inequality in \cite{E}, since the analogue of the Lebeau-Robbiano inequality for $A_\mu$ with $n\geq3$ remains unknown. The novelty lies in combining the real-analyticity of solutions away from the singularity, propagation of smallness estimates and a telescoping series argument.
	
	The following notations are effective throughout this paper: $C(\cdot)$ denotes a generic positive constant, which depends on the variables in the brackets
	and may vary from line to line; $B_R(x_0)$ stands for the ball
	centered at $x_0$ and of radius $R>0$; $|\omega|$ denotes the Lebesgue measure of the set $\omega$.
	
	The rest of this paper is organized as follows: In Section \ref{observability}, we establish an observability inequality from measurable subsets. In Section \ref{proof_of_bangbang}, we show the $L^{\infty}$-null controllability from measurable subsets, and use the above controllability to prove Theorem~\ref{ITP}. In Section \ref{conclusion}, we give some further remarks.

	\section{Observability inequality from
		measurable subsets}\label{observability}
	
	$\;\;\;\;$In this section, we use Carleman estimates and propagation of smallness from measurable sets for real-analytic functions, together with the telescoping series method, to establish an observability inequality from time-space measurable subsets for Equation \eqref{eq}.
	
	As in \cite{E} and \cite{LC}, we establish the following result without requiring additional geometric restrictions.
	
	\begin{Theorem}\label{obser-D}
		Let $\mu\leq\mu^*$, $T>0$ and $\mathcal{D}\subset\Omega\times(0,T)$ be an $(n+1)$-dimensional Lebesgue measurable subset of positive measure. Then there exists a constant $C=C(\Omega,\mathcal{D},\mu,T)\geq1$ such that
		for any solution $u$ of Equation~\eqref{eq},
		\begin{equation*}
			\Big(\int_\Omega |u(x,T)|^2\d x\Big)^{1/2}\leq C\int_{\mathcal{D}}|u(x,t)|\d x\d t,\;\;\forall\, u_0\in L^2(\Omega).
		\end{equation*}
	\end{Theorem}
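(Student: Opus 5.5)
The plan is to follow the telescoping-series strategy of \cite{PW2,AEWZ}, adapted as in \cite{LZ2}. Since no Lebeau--Robbiano spectral inequality for $A_\mu$ is available, the building block will be an interpolation estimate on a single time slice. This estimate is obtained from the open-set observability of \cite{E} together with spatial real-analyticity away from the origin.

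\emph{Step 1: reduction to a good time set.} By Fubini, there are a point $x_0\in\Omega\setminus\{0\}$, a radius $R>0$ with $\overline{B_{4R}(x_0)}\subset\Omega\setminus\{0\}$, and a measurable set $E\subset(0,T)$ with $|E|>0$ such that for every $t\in E$ the slice $\mathcal{D}_t=\{x:(x,t)\in\mathcal{D}\}$ satisfies $|\mathcal{D}_t\cap B_R(x_0)|\ge \gamma>0$. This works because removing a small ball around $0$ costs little measure, and a countable cover by balls then shows that one ball must carry a positive-measure portion of $\mathcal{D}$.

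\emph{Step 2: analyticity and propagation of smallness.} On $B_{4R}(x_0)$ the potential $\mu/|x|^2$ is real-analytic. Parabolic regularity for analytic coefficients (of Escauriaza--Montaner--Zhang type) will therefore give, for $0<s<t\le T$,
\[
|\partial_x^\alpha u(x,t)|\le \frac{C\,|\alpha|!}{(\rho (t-s)^{1/2}\wedge\rho)^{|\alpha|}}\,(t-s)^{-n/4}\|u(\cdot,s)\|_{L^2(\Omega)},\qquad x\in B_{2R}(x_0).
\]
Here the global $L^2\to L^\infty_{loc}$ bound away from $0$ comes from local energy estimates and contractivity of the semigroup. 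By Vessella's propagation of smallness for analytic functions from measurable sets, for $t\in E$ we get
\[
\|u(\cdot,t)\|_{L^2(B_R(x_0))}\le \bigl(C e^{C/(t-s)}\|u(\cdot,s)\|\bigr)^{1-\theta}\Bigl(\int_{\mathcal{D}_t\cap B_R}|u(x,t)|\,dx\Bigr)^{\theta},
\]
with $\theta=\theta(\gamma,R)\in(0,1)$.

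\emph{Step 3: interpolation using open-set observability.} Apply the observability inequality of \cite{E} on $B_R(x_0)\times(s,t)$, together with the decay $\|u(t)\|\le\|u(\tau)\|$. Combining this with Step~2 via Young's inequality should yield, for $s<\tau<t$ with $\tau$ ranging in $E$, an estimate of the form
\[
\|u(t)\|\le \varepsilon\|u(s)\|+ C\varepsilon^{-\beta}e^{C/(t-s)}\int_{\mathcal{D}\cap(\Omega\times(s,t))}|u|\,dx\,d\tau,\qquad \forall\varepsilon>0.
\]
To integrate the Step~2 bound over $\tau\in E\cap(s,t)$, I use $\|u(\cdot,t)\|\le\|u(\cdot,\tau)\|$. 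Observability of \cite{E} on the short interval $(\tau',\tau)$ bounds $\|u(\tau)\|$ by a time-integral over $B_R$; here it may be preferable to apply the $L^2(B_R)$ analyticity estimate uniformly in time rather than at a single slice. I expect this coupling of the Carleman cost $e^{C/(t-s)}$ with the slice-wise analytic estimate, including the control of constants uniformly in $\tau$, to be the main technical obstacle.

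\emph{Step 4: telescoping.} Take a Lebesgue density point $\ell$ of $E$ and the standard sequence $l_{m}\downarrow \ell$ with $l_m-l_{m+1}$ geometric and $|E\cap(l_{m+1},l_m)|\ge \frac13(l_m-l_{m+1})$ (\cite{PW2}). Apply Step~3 on each $(l_{m+1},l_m)$ with $\varepsilon=e^{-c/(l_m-l_{m+1})}$, chosen so that after multiplying by suitable weights the terms $\|u(l_m)\|$ telescope. Summing over $m$ gives $\|u(l_1)\|\le C\int_{\mathcal{D}}|u|$. Finally, $\|u(T)\|\le\|u(l_1)\|$ by contractivity concludes the proof of Theorem~\ref{obser-D}.
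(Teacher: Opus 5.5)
Your Step~3 does not go through as described, and the problem is not one of tracking constants. Nothing in your argument transfers information from the measurable time set $E$ to a full time interval.

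The cylinder observability inequality (Lemma~\ref{prop2}) requires $\int_{s}^{t}\|u(\cdot,\tau)\|_{L^2(B_R)}^2\,d\tau$ over a whole interval. Step~2 controls $\|u(\cdot,\tau)\|_{L^2(B_R)}$ only for $\tau\in E$. On each $(\ell_{m+1},\ell_m)$ you know only $|E\cap(\ell_{m+1},\ell_m)|\ge\frac13(\ell_m-\ell_{m+1})$, and $E$ may have empty interior. So using $\|u(t)\|\le\|u(\tau)\|$ and then observability on some $(\tau',\tau)$ just reintroduces the uncontrolled times in $(\tau',\tau)\setminus E$. The density-point lemma gives no rate at which these exceptional times become negligible, let alone the rate $e^{-C/h^{6}}$ that would be needed to absorb them against the observability cost.

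The only way to exploit a single slice would be an interpolation inequality $\|u(\cdot,\tau)\|_{L^2(\Omega)}\le (Ce^{C/(\tau-s)}\|u(\cdot,s)\|)^{1-\theta}\|u(\cdot,\tau)\|_{L^2(B_R)}^{\theta}$ with $\theta$ independent of $\tau-s$. But applying it to $u(\cdot,\tau)=\sum_{w_j^2\le\Lambda}a_je_j$ with $\tau-s=\Lambda^{-1/2}$ yields the Lebeau--Robbiano spectral inequality for $A_\mu$. That is precisely what is unavailable here.

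The missing ingredient is quantitative analyticity in time, combined with one-dimensional propagation of smallness. Lemma~\ref{analytic} gives, for $x\in\omega$ away from $0$,
$|\partial_x^\alpha\partial_t^\beta u(x,t)|\le Ce^{C/(t-s)}|\alpha|!\,\beta!\,\rho^{-|\alpha|}((t-s)/2)^{-\beta}\|u(\cdot,s)\|_{L^2(\Omega)}$.
This comes from the extension $u(x,y,t)=\sum_j a_je^{-w_j^2t+w_jy}e_j(x)$, which solves $(\partial_y^2+\Delta+\mu|x|^{-2})\partial_t^\beta u=0$, so interior analytic elliptic estimates apply. Lemma~\ref{interpolation} then proceeds in four steps:
\begin{enumerate}
\item Fix $x\in\omega$ and apply Lemma~\ref{one-d} in $t$ to pass from $F=E\cap(\tau,t_2)$ to all of $(\tau,t_2)$.
\item Integrate in $x$ and use H\"older to get $\|u\|_{L^1(\omega\times(\tau,t_2))}\le CM^{1-\vartheta}(\int_F\int_\omega|u|)^{\vartheta}$.
\item Apply Lemma~\ref{multi-d} on each slice $t\in F$ to pass from $\omega$ to $\mathcal{D}_t$.
\item Use $\|u\|_{L^2}\le\|u\|_{L^\infty}^{1/2}\|u\|_{L^1}^{1/2}$ to feed this into the cylinder observability with cost $e^{C/(t_2-\tau)^6}$.
\end{enumerate}
The result is exactly the one-interval estimate your Step~4 telescoping needs.

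Your Step~2 bound, with spatial analyticity radius $\rho(t-s)^{1/2}$, also cannot give $\theta=\theta(\gamma,R)$. Propagating smallness across $B_R$ with a radius of order $\sqrt{t-s}$ makes the exponent degenerate as $t-s\to0$, which breaks the telescoping. You need a uniform spatial radius $\rho$ with the whole cost placed in $e^{C/(t-s)}$, which the extension argument provides.

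Finally, the explicit $e^{C/T^6}$ dependence of the observability constant on the interval length is not stated in the Carleman reference. It has to be extracted, as done in the Appendix.
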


	This section consists of four parts. 
	In Subsection \ref{observ_from_open}, we state the observability inequality from open cylinders in \cite{E}. 
	Subsection \ref{analyticity} is devoted to quantifying the real-analyticity estimate for the solutions to Equation~\eqref{eq} in the region away from the singular point. 
	Subsection \ref{interpolation_ineq} presents a global interpolation inequality from measurable subsets.
	Subsection \ref{proof_of_observ} provides the proof of Theorem~\ref{obser-D} via the telescoping series method.
	
	\subsection{Observability inequality from cylinder observation}\label{observ_from_open}
	
	$\;\;\;\;$We use the existing Carleman estimate in
	\cite{E} to establish an observability inequality for any solution to Equation~\eqref{eq}, where the observation is the cylinder $\omega\times(0,T)$ with $\omega\subset\Omega$ being a non-empty open subset.
	
	The following Lemma~\ref{prop2} is a slight modification of
	the observability inequality \cite[(1.7)]{E}, where the dependence of the constant on the time interval $(0,T)$ was not specified. Here, we only state it, leaving its proof to the Appendix.
	\begin{Lemma}\label{prop2}
		There exists a constant $C=C(\Omega,\omega)\geq1$ such that for any $T\in(0,1]$ and any
		solution $w$ to the equation
		\begin{equation*}
			\begin{cases}
				\partial_tw+\Delta w+\frac{\mu}{|x|^2}w=0\;\;&\text{in}\;\;\Omega\times(0,T),\\
				w=0\;\;&\text{on}\;\;\partial\Omega\times(0,T),\\
				w(\cdot,T)=w_T\;\;&\text{in}\;\;\Omega,
			\end{cases}
		\end{equation*}
		where $\mu\leq\mu^*$ and $w_T\in L^2(\Omega)$, satisfies the observability inequality
		\begin{equation}\label{independence}
			\int_{\Omega}|w(x,0)|^2\d x\leq Ce^{\frac{C}{T^6}}\int_{\omega\times(0,T)}|w(x,t)|^2\d x\d t,
			\;\;\forall w_T\in L^2(\Omega).
		\end{equation}
	\end{Lemma}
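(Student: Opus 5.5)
Lemma~\ref{prop2} is a quantitative form of \cite[(1.7)]{E}, so I will redo the Carleman-to-observability argument of \cite{E} and track how every constant depends on $T$. The starting point is the global Carleman estimate of \cite{E} for the backward singular heat operator $\partial_t+\Delta+\mu/|x|^2$ with $\mu\le\mu^*$. It uses weights of the form
\[
\theta(t)=\frac{1}{(t(T-t))^{k}},\qquad
\sigma(x,t)=\theta(t)\big(e^{2\lambda\|\psi\|_\infty}-e^{\lambda\psi(x)}\big),
\]
where $\psi$ is a weight adapted both to $\omega$ and to the singularity at the origin. The estimate holds for $s\ge s_0(T+T^2)$, or for $s\ge s_0 T^{m}$ in whatever normalization \cite{E} uses. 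Its left-hand side contains $s^3\int\!\!\int\theta^3 e^{-2s\sigma}|w|^2$, possibly with a Hardy-type weight. Its right-hand side is the local term $s^3\int\!\!\int_{\omega\times(0,T)}\theta^3e^{-2s\sigma}|w|^2$. In the first step I will fix $\lambda$ depending only on $(\Omega,\omega)$ and check, by rescaling time, that the threshold on $s$ grows polynomially in $1/T$ when $T\le1$. Choosing the exponent $k$ in $\theta$ as in \cite{E} (I expect $k=3$ or a comparable value) is what will eventually produce the factor $e^{C/T^6}$.

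In the second step I restrict the left-hand side of the Carleman estimate to $t\in(T/4,3T/4)$. On that interval $\theta\sim T^{-2k}$, so the weight $\theta^3e^{-2s\sigma}$ is bounded below by roughly $e^{-Cs T^{-2k}}$. On the right-hand side I bound $s^3\theta^3e^{-2s\sigma}$ above by a constant uniformly in $(x,t)$, using $\sup_{\tau>0}\tau^3e^{-c\tau}<\infty$. With $s=C(T+T^2)$ or the threshold from the first step, this gives
\[
\int_{T/4}^{3T/4}\!\!\int_\Omega|w|^2\d x\d t\le Ce^{C/T^{p}}\int_{\omega\times(0,T)}|w|^2\d x\d t
\]
for some explicit power $p$. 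If \cite{E} controls the singular term only with a weight $|x|^{-2}$ or similar, I will use the boundedness of $\Omega$ to drop that weight from below.

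In the third step I use the dissipativity of the backward problem. Since $A_\mu\ge0$ by the Hardy inequality, $t\mapsto\|w(\cdot,t)\|_{L^2}$ is nondecreasing in $t$ for the backward equation, so $\|w(\cdot,0)\|^2\le\|w(\cdot,t)\|^2$ for every $t$. Averaging over $(T/4,3T/4)$ then gives
\[
\|w(\cdot,0)\|^2\le \frac{2}{T}\int_{T/4}^{3T/4}\|w(\cdot,t)\|^2\d t .
\]
The factor $2/T$ is absorbed into $e^{C/T^6}$, after enlarging $C$, provided $p\le6$.

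The main obstacle is the first step: verifying the precise dependence on $T$ of the Carleman parameter in \cite{E}, whose weights are built around the singularity. I will follow the proof of \cite{E} line by line with $T\le1$ and observe that every appearance of $T$ enters only through the derivatives of $\theta$, namely $|\theta_t|\le CT\theta^{1+1/k}$ and $|\theta_{tt}|\le CT^2\theta^{1+2/k}$. If the resulting power of $1/T$ exceeds $6$ with the weights as given, I will try adjusting $k$. Otherwise $e^{C/T^6}$ is dominated by $e^{C/T^{p}}$ for $T\le 1$ whenever $p\ge 6$, and any fixed exponent suffices for the later telescoping argument, so the stated form follows after relabelling constants.
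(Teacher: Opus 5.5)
Your architecture matches the paper's: Ervedoza's Carleman estimate with $\theta=(t(T-t))^{-3}$, restriction of the left-hand side to $[T/4,3T/4]$ where $\theta\sim T^{-6}$, a $T$-uniform upper bound for the weights on the observation side, and the monotonicity $\|w(\cdot,0)\|_{L^2}\le\|w(\cdot,t)\|_{L^2}$. As written, however, one step is missing and the decisive one is unresolved.

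\textbf{The gradient term.} The estimate of \cite{E} (Lemma A in the Appendix) does not have only a zero-order observation term. Its right-hand side also contains $s\lambda^2\int_{\omega_0\times(0,T)}\theta\phi e^{-2\sigma}|\nabla w|^2\,dx\,dt$ with $\omega_0\subset\omega$. So your second step does not yet give $\int_{\omega\times(0,T)}|w|^2$. You need a weighted Caccioppoli inequality $\int_{\omega_0\times(0,T)}e^{-\sigma}|\nabla w|^2\le C\int_{\omega\times(0,T)}w^2$. It follows by testing the equation with $\chi^2e^{-\sigma}w$, where $\chi\in C_c^\infty(\omega)$ and $\chi=1$ on $\omega_0$. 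Because $e^{-\sigma}$ vanishes at $t=0$ and $t=T$, the bad endpoint term $\int\chi^2|w(\cdot,T)|^2$ disappears. The constant is independent of $T$ because $(|\sigma_t|+|\nabla\sigma|^2)e^{-\sigma}\le C\theta^2e^{-c\theta}\le C$ on $\omega\times(0,T)$ once $s,\lambda$ are fixed.

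\textbf{The $T$-dependence.} This is the whole content of the lemma, and here your plan expects the wrong thing and its fallback inference is backwards. With $k=3$, your own bounds $|\theta_t|\le CT\theta^{4/3}$ and $|\theta_{tt}|\le CT^2\theta^{5/3}$, together with $T\le 1$ and $\theta\ge 4^3$, give $|\theta'|+|\theta\theta'|\le C\theta^3$ and $|\theta''|\le C\theta^{5/3}$ with $C$ independent of $T$. These are exactly the properties used in the absorption steps of \cite{E}. Hence $\lambda$ and $s$ can be fixed independently of $T\in(0,1]$: the threshold does not grow in $1/T$. Then $\sigma\sim\theta\sim T^{-6}$ on $[T/4,3T/4]$ gives precisely $e^{C/T^6}$, which is how the paper argues.

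If the threshold did grow like $T^{-m}$, you would only obtain $e^{C/T^{6+m}}$. Your claim that ``the stated form follows after relabelling constants'' is then false: for $p>6$ one has $e^{C/T^p}/e^{C'/T^6}\to\infty$ as $T\to 0$ for every $C'$. A bound with $e^{C/T^p}$ is therefore weaker than the lemma, not stronger. It is true that the telescoping argument for Theorem~\ref{obser-D} would survive another exponent (take the $p$-th root in the choice of $q$), but that proves a different lemma.
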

	
	\begin{Remark}
		The factor $C/T^6$ may not be optimal. However, it is sufficient for our later use in deriving a suitable telescoping series (see Subsection 2.4).
	\end{Remark}

	\subsection{Real-analyticity away from the singularity point}\label{analyticity}
	$\;\;\;\;$The aim of this subsection is to establish the real-analyticity of the solution $u$ to Equation~\eqref{eq} on the subdomain away from the singular point. Since the unbounded operator $A_\mu$ ($\mu\leq\mu^*$, see \eqref{Amu})  is self-adjoint with compact inverse in some suitable functional framework (cf. \cite{VZ}), there exists an orthonormal basis of $L^2(\Omega)$ formed by eigenfunctions of this operator. Thus we have the Fourier series decomposition of any solution $u$ to Equation~\eqref{eq}. Using this decomposition and the classical real-analyticity estimates for solutions to elliptic equations with real-analytic coefficients, we obtain the desired conclusion in the spirit of \cite[Lemma 6]{AEWZ}.

	We are now in a position to state the main result in this subsection. We present its proof in considerable detail, since some of the technical arguments may be useful in establishing similar results for other problems.
	\begin{Lemma}\label{analytic}
		Let $\mu\leq \mu^*$ and $\omega$ be a subdomain of $\Omega$ with $0\notin\overline{\omega}$. Then there are two constants  $C=C(\Omega,\omega,\mu)\geq1$ and $\rho=\rho(\Omega,\omega,\mu)$ with $\rho\in(0,1]$ such that for any solution $u$ to Equation~\eqref{eq},
		\begin{equation*}
			|\partial_x^\alpha\partial_t^\beta u(x,t)|\leq \frac{Ce^{C/(t-s)}|\alpha|!\beta!}{\rho^{|\alpha|}\big((t-s)/2
				\big)^\beta}\|u(\cdot,s)\|_{L^2(\Omega)}, \;\;\forall \alpha\in\mathbb{N}^n,\,\beta\in \mathbb{N},
		\end{equation*}
		when $x\in\omega$ and $0\leq s<t$.
	\end{Lemma}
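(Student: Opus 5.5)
We follow the approach of \cite[Lemma 6]{AEWZ}. Let $\{e_j\}_{j\ge1}$ be an $L^2(\Omega)$-orthonormal basis of eigenfunctions of $A_\mu$ with eigenvalues $\lambda_1\le\lambda_2\le\cdots$, where $\lambda_j\to\infty$ and $\lambda_1>-\infty$. By the Hardy inequality, $\lambda_1\ge0$ when $\mu\le\mu^*$, although only a lower bound is needed. By time translation we may take $s=0$, and we write $u(x,t)=\sum_j a_j e^{-\lambda_j t}e_j(x)$ with $\sum_j|a_j|^2=\|u(\cdot,0)\|^2_{L^2(\Omega)}$. The plan is to reduce everything to an elliptic problem in the augmented variables $(x,y)\in\omega'\times\mathbb{R}$. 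Here $\omega'$ is a slightly larger open set with $\overline\omega\subset\omega'$, $\overline{\omega'}\subset\Omega\setminus\{0\}$, and $\mathrm{dist}(\overline{\omega'},0)>0$. On $\omega'$ the coefficient $\mu/|x|^2$ is real-analytic with uniformly controlled derivatives.

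\textbf{Step 1 (elliptic lifting).} Set $v(x,y,t)=\sum_j a_je^{-\lambda_jt}\cosh(\sqrt{\lambda_j}\,y)e_j(x)$, with $\cosh(\sqrt{\lambda}y)$ interpreted as $\cos(\sqrt{|\lambda|}y)$ for any nonpositive eigenvalue. For fixed $t>0$, $v$ solves $\Delta_xv+\partial_y^2v+\frac{\mu}{|x|^2}v=0$ in $\omega'\times(-1,1)$, with $v(\cdot,0,t)=u(\cdot,t)$. Since $|\cosh(\sqrt{\lambda_j}y)|\le e^{\sqrt{|\lambda_j|}}$ for $|y|\le1$ and $\sup_{\lambda}e^{-\lambda t+\sqrt{|\lambda|}}\le Ce^{C/t}$, orthonormality gives
\[
\|v(\cdot,\cdot,t)\|_{L^2(\Omega\times(-1,1))}\le Ce^{C/t}\|u(\cdot,0)\|_{L^2(\Omega)}.
\]

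\textbf{Step 2 (spatial analyticity).} We apply the classical interior analyticity estimates for elliptic equations with real-analytic coefficients (Morrey--Nirenberg, in the quantitative form used in \cite{AEWZ}) to $v(\cdot,\cdot,t)$ on $\omega'\times(-1,1)$. This yields $\rho\in(0,1]$ and $C$, depending on $\mathrm{dist}(\omega,\partial\omega')$ and on the analyticity bounds of $\mu|x|^{-2}$ on $\omega'$, such that
\[
|\partial_x^\alpha v(x,0,t)|\le C|\alpha|!\rho^{-|\alpha|}\|v(\cdot,\cdot,t)\|_{L^2(\omega'\times(-1,1))}
\]
for all $x\in\omega$. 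Combining this with Step 1 gives the bound in the case $\beta=0$.

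\textbf{Step 3 (time derivatives).} The time derivatives come from the semigroup structure rather than from the series. Since $A_\mu$ is self-adjoint and bounded below, $\partial_t^\beta u(\cdot,t)=(-A_\mu)^\beta e^{-(t-\tau)A_\mu}u(\cdot,\tau)$ for $\tau<t$. Using $\lambda^\beta e^{-\lambda r}\le(\beta/(er))^\beta\le\beta!\,r^{-\beta}$, we get $\|\partial_t^\beta u(\cdot,t)\|_{L^2}\le\beta!\,((t-\tau)/2\cdot2)^{-\beta}\cdots$. Concretely, we choose $\tau=t/2$ and then apply Steps 1--2 to the solution $\partial_t^\beta u$, starting from time $t/2$ rather than $0$. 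This gives
\[
|\partial_x^\alpha\partial_t^\beta u(x,t)|\le Ce^{2C/t}\,|\alpha|!\,\rho^{-|\alpha|}\,\|\partial_t^\beta u(\cdot,t/2)\|_{L^2}\le Ce^{C'/t}|\alpha|!\,\rho^{-|\alpha|}\,\beta!\,(t/2)^{-\beta}\|u(\cdot,0)\|_{L^2}.
\]
If $\lambda_1<0$, an extra factor $e^{|\lambda_1|t}$ appears; it is absorbed for bounded $t$, or removed by a harmless shift. This is exactly the claimed estimate with $t-s$ replacing $t$.

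\textbf{Main obstacle.} The main difficulty is Step 2: making the elliptic analyticity estimate quantitative with constants independent of $t$ and of the solution. This requires the local analyticity theorem to hold uniformly for the operator $\Delta_{x,y}+\mu|x|^{-2}$ on $\omega'\times(-1,1)$. It is valid because the potential is analytic there, with Cauchy-type bounds $|\partial^\gamma(|x|^{-2})|\le C|\gamma|!\,r^{-|\gamma|}$ where $r$ is the distance to $0$. A second, lesser point is checking that the series defining $v$ converges in $H^1_{loc}$ so that it is a genuine weak solution, whose interior regularity then bootstraps to analyticity. The bounds of Step 1, with the factor $e^{-\lambda_jt}$, give this convergence directly.
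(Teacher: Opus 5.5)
Your proposal is correct and follows essentially the same route as the paper. Both arguments use an eigenfunction expansion, lift $u$ to a solution of $(\partial_y^2+\Delta+\mu|x|^{-2})v=0$ in an extra variable $y$ (you use $\cosh(\sqrt{\lambda_j}\,y)$ where the paper uses $e^{w_jy}$), apply interior analyticity estimates away from the origin, and bound the $L^2$ norm via Parseval to get $e^{C/t}$. The only real difference is in the time derivatives: you split the interval at $t/2$ and reapply the $\beta=0$ case to $\partial_t^\beta u$, whereas the paper lifts $\partial_t^\beta u$ directly and splits the weight $e^{-2w_j^2t}$ inside the Parseval sum; both give the factor $\beta!\,(t/2)^{-\beta}$.
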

	\begin{proof}
		It suffices to prove the case that $s=0$.
		For each $\mu\leq\mu^*$, we first assume that $\{w_j^2\}_{j\geq1}$, with
		$$0<w_1\leq w_2\leq\cdots\rightarrow+\infty,$$
		and $\{e_j\}_{j\geq1}$ are accordingly the sets of eigenvalues and $L^2(\Omega)$-normalized eigenfunctions of the operator
		$A_\mu$ with zero Dirichlet boundary condition on $\partial\Omega$, i.e.,
		\begin{equation*}
			\begin{cases}
				-\Delta e_j-\frac{\mu}{|x|^2}e_j=w_j^2e_j,\;\;&\text{in}\;\;\Omega,\\
				e_j=0,\;\;&\text{on}\;\;\partial\Omega.
			\end{cases}
		\end{equation*}
		Let $u$ be the solution of Equation~\eqref{eq} with the initial value $u_0\in L^2(\Omega)$, which
		has the following Fourier series decomposition
		\begin{equation*}
			u_0=\sum_{j\geq1}a_je_j,\;\;\text{where}\;\;\sum_{j\geq1}a_j^2<+\infty.
		\end{equation*}
		Clearly, for each $t>0$ we have that
		\begin{equation*}
			u(x,t)=\sum_{j\geq1}a_je^{-w_j^2t}e_j(x), \;\;x\in\Omega.
		\end{equation*}
		Now, we define a new function $u(\cdot,\cdot,\cdot):\Omega\times\mathbb R\times\mathbb R^+\rightarrow \mathbb R$ by
		\begin{equation*}
			u(x,y,t)=\sum_{j\geq1}a_je^{-w_j^2t+w_jy}e_j(x),\;\;x\in\Omega,
			y\in\mathbb{R}, t>0.
		\end{equation*}
		Then for each $t>0$, it holds that
		\begin{equation*}
			u(x,0,t)=u(x,t),\;\;x\in\Omega,t>0,
		\end{equation*}
		\begin{equation}\label{3-2}
			\partial_t^\beta u(x,y,t)=\sum_{j\geq1}a_j(-w_j^2)^\beta
			e^{-w_j^2t+w_jy}e_j(x),\;\;x\in\Omega, y\in\mathbb{R}, t>0.
		\end{equation}
		Moreover, for each $t>0$, the function $\partial_t^\beta u(\cdot,\cdot,t)$ satisfies
		\begin{equation}\label{a-1}
			\Big(\partial_y^2+\Delta+\frac{\mu}{|x|^2}\Big)\partial_t^\beta
			u(x,y,t)=0\;\;\text{in}\;\;\Omega\times\mathbb{R}.
		\end{equation}
		
		Since $0\notin\overline{\omega}$, without loss of generality
		(otherwise use a finite covering argument), we can assume that there exists a positive constant $R\leq1$ and $x_0\in\omega$ such that
		\begin{equation*}
			\omega\subset B_{R}(x_0),\;\;B_{2R}(x_0)\subset\Omega,\;\;\;\text{and}\;\;\;\;
			B_{2R}(x_0)\cap B_{R}(0)=\emptyset.
		\end{equation*}
		Because  the function $a(x,y)\triangleq\frac{\mu}{|x|^2}$ is real-analytic in $B_{2R}(x_0,0)\subset\Omega\times\mathbb{R}$,
		by the real-analytic estimates of solutions  to linear elliptic
		equations with real-analytic coefficients (cf. \cite[Chapter 3]{J1}), 
		we have that there are constants $C=C(R,|\mu|)\geq1$ and $\rho=\rho(R,|\mu|)\in(0,1)$ such that any solution of Equation~\eqref{a-1} satisfies
		\begin{equation}\label{15-3}
			\|\partial_x^\alpha\partial_t^\beta u(\cdot,\cdot,t)\|_{L^\infty(B_{R}(x_0,0))}\leq \frac{C|\alpha|!}{\rho^{|\alpha|}}
			\Big(\int_{B_{2R}(x_0,0)}|\partial_t^\beta u(x,y,t)|^2\d x\d y
			\Big)^{\frac{1}{2}}\,,\;\;\forall \alpha\in\mathbb{N}^n.
		\end{equation}
		Notice that for each $t>0$,
		\begin{equation}\label{3-3}
			\begin{split}
				\int_{B_{2R}(x_0,0)}|\partial_t^\beta u(x,y,t)|^2\d x\d y
				&\leq \int_{-2R}^{2R}\int_{B_{2R}(x_0)}|\partial_t^\beta u(x,y,t)|^2\d x\d y\\
				&\leq \int_{-2R}^{2R}\int_{\Omega}|\partial_t^\beta u(x,y,t)|^2\d x\d y.
			\end{split}
		\end{equation}
		By the orthonormality of $\{e_j\}_{j\geq1}$ and  the Bessel-Parseval identity, we get from \eqref{3-2} that for each $y\in\mathbb{R}$ and $t>0$,
		\begin{equation}\label{3-4}
			\begin{split}
				\int_{\Omega}|\partial_t^\beta u(x,y,t)|^2\d x
				&=\int_{\Omega}\big|\sum_{j\geq1}a_j(-w_j^2)^\beta
				e^{-w_j^2t+w_jy}e_j(x)\big|^2\d x\\
				&=\sum_{j\geq1}a_j^2w_j^{4\beta}e^{-2w_j^2t+2w_jy}.
			\end{split}
		\end{equation}
		Hence, it follows from \eqref{3-3} and \eqref{3-4} that for each $t>0$
		\begin{equation}\label{15-1}
			\begin{split}
				\int_{B_{2R}(x_0,0)}|\partial_t^\beta u(x,y,t)|^2\d x\d y
				&\leq \sum_{j\geq 1}a_j^2w_j^{4\beta}e^{-2w_j^2t+4Rw_j}\\
				&\leq \max_{j\geq1}\big\{w_j^{4\beta}e^{-w_j^2t}\big\}
				\max_{j\geq 1}\big\{e^{-w_j^2t+4Rw_j}\big\} \sum_{j\geq1}a_j^2.
			\end{split}
		\end{equation}
		For each $\beta\in\mathbb{N}$, note that $\max_{\lambda>0} \{\lambda^{2\beta}e^{-\lambda t}\}=(\frac{2\beta}{te})^{2\beta}$, which, together with the Stirling formula $m!\sim \Big(\frac{m}{e}\Big)^m\sqrt{2\pi m}$, yields that for some generic constant $C\geq1$, it holds that
		\begin{equation}\label{15-2}
			\max_{j\geq1}\big\{w_j^{4\beta}e^{-w_j^2t}\big\}
			\leq C\Big(\frac{2}{t}\Big)^{2\beta}\big(\beta !\big)^2.
		\end{equation}
		Because $\max_{j\geq1}\{ e^{-w_j^2t+4Rw_j} \}\leq e^{4R^2/t},$ it follows from \eqref{15-1} and \eqref{15-2}  that
		\begin{equation*}
			\int_{B_{2R}(x_0,0)}|\partial_t^\beta u(x,y,t)|^2\d x\d y
			\leq Ce^{\frac{4R^2}{t}}\Big(\frac{2}{t}\Big)^{2\beta}\big(\beta !\big)^2.
		\end{equation*}
		This, along with \eqref{15-3}, leads to that
		\begin{equation*}
			\|\partial_x^\alpha\partial_t^\beta u(\cdot,\cdot,t)\|_{L^\infty(B_R(x_0,0))}
			\leq \frac{Ce^{2R^2/t}|\alpha|!\beta!}
			{\rho^{|\alpha|}\big(t/2\big)^\beta}\|u_0\|_{L^2(\Omega)}, \;\;\forall
			\alpha\in\mathbb{N}^n, \,\beta\in\mathbb{N},
		\end{equation*}
		for some positive constants $C=C(R,|\mu|)$ and $\rho=\rho(R,|\mu|)$.
		In particular,
		\begin{equation*}
			\|\partial_x^\alpha\partial_t^\beta u(\cdot,t)\|_{L^\infty(\omega)}
			\leq \frac{Ce^{2R^2/t}|\alpha|!\beta!}
			{\rho^{|\alpha|}\big(t/2\big)^\beta}\|u_0\|_{L^2(\Omega)}, \;\;\forall
			\alpha\in\mathbb{N}^n, \,\beta\in\mathbb{N},
		\end{equation*}
		which completes the proof.
	\end{proof}

	\subsection{Interpolation inequality}\label{interpolation_ineq}
	$\;\;\;\;$The following two lemmas are concerned with the propagation of smallness estimates from measurable sets for real-analytic functions.
	This kind of estimate was introduced in previous works, such as \cite{AEWZ}, \cite{EZ} and \cite{LWYZ}.
	For later use, one lemma is for the one-dimensional case, and the other is for the multi-dimensional case. Their proofs are omitted; we refer the reader to \cite{AE}.
	
	\begin{Lemma}\label{one-d}
		Let $g:[a,a+s]\rightarrow \mathbb{R}$, where $a\in\mathbb{R}$ and $s>0$, be a real-analytic function satisfying
		\begin{equation*}
			\Big|\frac{d^k}{dx^k}g(x)\Big|\leq Mk!(s\rho)^{-k},\;\;\forall x\in[a,a+s],\;\;\forall k\in\mathbb{N},
		\end{equation*}
		for some constants $M>0$ and $\rho\in(0,1]$. Assume that $F\subset[a,a+s]$ is a measurable subset of positive measure.
		Then there are two constants $C=C(\rho,|F|/s)\geq 1$ and $\vartheta
		=\vartheta(\rho,|F|/s)$ with $\vartheta\in(0,1)$ such that
		\begin{equation*}
			\|g\|_{L^\infty(a,a+s)}\leq CM^{1-\vartheta}\Big(\frac{1}{|F|}\int_{F}|g(x)|\d x\Big)^{\vartheta}.
		\end{equation*}
	\end{Lemma}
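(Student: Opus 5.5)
Normalize to $[0,1]$ by setting $h(z)=g(a+sz)/M$, $z\in[0,1]$, and $E=(F-a)/s$, so that $|E|=|F|/s$ and $|h^{(k)}(z)|\le k!\rho^{-k}$ on $[0,1]$. After this change of variables the claim becomes $\|h\|_{L^\infty(0,1)}\le C\big(|E|^{-1}\int_E|h|\big)^{\vartheta}$, with $C,\vartheta$ depending only on $\rho$ and $|E|$. The scaling $M^{1-\vartheta}$ comes out automatically: if $\|h\|_\infty\le C m^\vartheta$ with $m=|E|^{-1}\int_E|h|$, then multiplying by $M$ gives $\|g\|_\infty\le C M^{1-\vartheta}(M m)^\vartheta$, and $Mm=|F|^{-1}\int_F|g|$.

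\emph{Step 1: complex extension.} The Taylor series of $h$ at any $z_0\in[0,1]$ converges on the disc of radius $\rho$. Hence $h$ extends holomorphically to the neighbourhood $D_\rho=\{z:\mathrm{dist}(z,[0,1])<\rho/2\}$, with $|h|\le 2$ there. Let $N=\|h\|_{L^\infty(0,1)}\le 1$. We may assume $N>0$, and we pick $x^*\in[0,1]$ with $|h(x^*)|=N$.

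\emph{Step 2: sublevel set.} Let $m=|E|^{-1}\int_E|h|$. By Chebyshev, the set $E'=\{x\in E: |h(x)|\le 2m\}$ satisfies $|E'|\ge|E|/2$. It therefore suffices to show that $\sup_{E'}|h|\ge c\,N^{1/\vartheta}$ for some $c=c(\rho,|E|)$ and $\vartheta=\vartheta(\rho,|E|)$.

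\emph{Step 3: Remez-type inequality for analytic functions.} This is the main step, and I will use the standard two-constants / Cartan–Remez argument. Apply Jensen's formula to $h$ on a chain of discs covering $[0,1]$ inside $D_\rho$. There are at most $K(\rho)$ such discs, centred at points of $[0,1]$ and of radius comparable to $\rho$. The number of zeros of $h$ in a slightly smaller neighbourhood $D'$ of $[0,1]$ is then bounded by $n\le C(\rho)\log(2/N)$. To see this, start from $x^*$ and propagate the lower bound $\log|h|\ge \log N - C$ along the chain by Harnack's inequality applied to $\log(2/|h|)$ off zeros, or equivalently by Jensen on each disc.

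Factor $h=P\cdot G$, where $P$ is the monic polynomial of degree $n$ over these zeros and $G$ is zero-free on $D'$. Harnack applied to $\log|G|$, together with $\log|G|\le C+n\log(C/\rho)$, gives
\[
\frac{\max_{[0,1]}|G|}{\min_{[0,1]}|G|}\le \Big(\frac{C}{N}\Big)^{C(\rho)} .
\]
For $P$, the classical Remez inequality gives $\max_{[0,1]}|P|\le (4/|E'|)^n\sup_{E'}|P|$.

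Combining the two bounds,
\[
N\le \Big(\frac{8}{|E|}\Big)^{C\log(2/N)}\Big(\frac{C}{N}\Big)^{C}\sup_{E'}|h|,
\]
that is, $\sup_{E'}|h|\ge N^{\,1+C(\rho)(1+\log(8/|E|))}/C$. Setting $1/\vartheta=1+C(\rho)(1+\log(8/|E|))$ gives $N\le C(2m)^\vartheta$, which is the claim.

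The delicate point is Step 3, and specifically the zero count: one must ensure the lower bound at $x^*$ propagates along the chain to give uniform control of the number of zeros on all of $[0,1]$, with constants depending only on $\rho$. Since this is exactly the classical propagation-of-smallness estimate (Nadirashvili, Vessella, Apraiz–Escauriaza), I would follow the argument in \cite{AE} at that point.
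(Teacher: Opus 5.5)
Your argument is correct in outline, but it takes a different route from the one the paper relies on. The paper gives no proof of this lemma and refers to \cite{AE}. There, as far as I can tell, the estimate comes from the known propagation-of-smallness inequality for analytic functions from sets of positive measure (Vessella/Nadirashvili type, with $\sup_F|g|$ on the right). It is then followed by exactly your Step~2: apply that inequality on the sublevel set $\{x\in F:\ |g(x)|\le 2|F|^{-1}\int_F|g|\}$, which has measure at least $|F|/2$.

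You instead prove the $\sup$-type inequality from scratch by Kovrijkine's scheme: Jensen zero count, factorization $h=PG$, polynomial Remez for $P$, Harnack for $\log|G|$. This gives a self-contained proof with explicit dependence, $1/\vartheta\approx 1+C(\rho)\log(8/|E|)$. The cost is that the factorization is intrinsically one-dimensional, whereas the cited results also cover Lemma~\ref{multi-d}.

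The remaining pieces of your Step~3 check out:
\begin{itemize}
\item the bound on $|G|$ via the maximum principle on a slightly larger neighbourhood, on whose boundary $|P|\ge(c\rho)^n$;
\item the lower bound $|G(x^*)|\ge N2^{-n}$, which Harnack needs;
\item Remez applied to the real polynomial $|P|^2$ when the roots are complex.
\end{itemize}

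The one step to restate is the zero count, because the mechanism you describe does not work literally. A pointwise bound $\log|h|\ge\log N-C$ cannot be propagated along the chain, for three reasons.
\begin{itemize}
\item $h$ may vanish on the chain.
\item $\log(2/|h|)$ is only superharmonic and equals $+\infty$ at zeros, so there is no two-sided Harnack inequality. Restricting to regions free of zeros is circular, since the zeros are what you are counting.
\item The loss is multiplicative in $\log(2/N)$, not additive. For example, $h(z)=e^{-L(\rho+z)}$ satisfies the hypotheses, because $k!\,(L\rho)^{-k}\ge e^{-L\rho}$. It has $N=h(0)=e^{-L\rho}$ but $h(1)=N^{1+1/\rho}$.
\end{itemize}

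What does work is the super-mean-value property of $v=\log(2/|h|)\ge0$ on $D_\rho$.
\begin{itemize}
\item If $B(z_j,r)\subset D_\rho$, then $\int_{B(z_j,r)}v\le\pi r^2v(z_j)$. Hence every subdisc of radius $r/10$ contains a point $z_{j+1}$ with $v(z_{j+1})\le100\,v(z_j)$.
\item Start from $z_0=x^*$ with $r\sim\rho$ and advance the centres along $[0,1]$ in steps of order $\rho$. After at most $K(\rho)$ steps you have $v(z_j)\le100^{K(\rho)}\log(2/N)$.
\item Jensen's formula on $B(z_j,r)$, using $|h|\le2$, bounds the number of zeros in $B(z_j,r/2)$ by $v(z_j)/\log2$.
\end{itemize}
Summing gives $n\le C(\rho)\log(2/N)$ on a neighbourhood of $[0,1]$ of width comparable to $\rho$. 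That is exactly the bound the rest of your argument uses, so with this replacement your proof is complete.
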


	\begin{Lemma}\label{multi-d}
		Let $\omega$ be a bounded domain in $\mathbb{R}^n$, $n\geq1$.
		Let $f:\omega\rightarrow \mathbb{R}$ be a real-analytic function
		satisfying
		\begin{equation*}
			|\partial_x^{\alpha}f(x)|\leq M|\alpha|!\rho^{-|\alpha|},\;\;
			\forall x\in\omega,\;\;\forall \alpha\in\mathbb{N}^n,
		\end{equation*}
		for some $M>0$ and $\rho\in(0,1]$.
		Let $\omega_1\subset\omega$ be a measurable subset of positive
		measure. Then there exist two constants $C=C(\omega,\rho,|\omega_1|)\geq1$ and $\vartheta=\vartheta(\omega,\rho,|\omega_1|)$ with $\vartheta\in(0,1)$ such that
		\begin{equation*}
			\|f\|_{L^\infty(\omega)}\leq CM^{1-\vartheta}\Big(
			\int_{\omega_1}|f(x)|\d x\Big)^{\vartheta}.
		\end{equation*}
	\end{Lemma}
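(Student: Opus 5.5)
The plan is to reduce Lemma~\ref{multi-d} to the one-dimensional Lemma~\ref{one-d} by restricting $f$ to line segments, and then to propagate the resulting estimate over $\omega$ by a finite chain of balls. First, the derivative bounds give analytic continuation. Summing the Taylor series shows that $f$ extends analytically to the neighborhood $\widetilde\omega=\{x:\ \mathrm{dist}(x,\omega)<\rho/(2n)\}$, with bounds $|\partial^\alpha f|\le C_n M|\alpha|!(\rho/2)^{-|\alpha|}$ there. Moreover, for every unit vector $\theta$ and every segment $x+t\theta\subset\widetilde\omega$, the function $g(t)=f(x+t\theta)$ satisfies $|g^{(k)}(t)|\le C_nMk!(\rho/(2n))^{-k}$, since $\frac{d^k}{dt^k}f(x+t\theta)$ is a sum of at most $n^k$ terms $\theta^\alpha\partial^\alpha f$. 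Thus, on a segment of length $s\le r_0:=\rho/(4n)$, Lemma~\ref{one-d} applies with a fixed parameter $\rho'\sim1$ (after rescaling by $s$) and constant $C_nM$.

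\emph{Local step (positive measure to a ball).} Since $|\omega_1|>0$, I pick a ball $B=B_r(z_0)$ with $r\le r_0$, $B\subset\widetilde\omega$ and $|\omega_1\cap B|\ge\frac12|B|$, using a Lebesgue density point. By Fubini,
\[
\int_B\int_{\omega_1\cap B}|f(y)||x-y|^{1-n}\d y\d x\le C_nr\int_{\omega_1}|f|\d y.
\]
Chebyshev's inequality then gives a set $G\subset B$ with $|G|\ge\frac12|B|$ on which $\int_{\omega_1\cap B}|f(y)||x-y|^{1-n}\d y\le Cr^{1-n}\int_{\omega_1}|f|$. For $x\in G$, I write this integral in polar coordinates about $x$; the weight $|x-y|^{1-n}$ cancels the Jacobian. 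Hence it equals $\int_{S^{n-1}}\int_{F_\theta}|f(x+t\theta)|\d t\d\theta$, where $F_\theta=\{t:\ x+t\theta\in\omega_1\cap B\}$. Since $|\omega_1\cap B|\ge\frac12|B|$, a set of directions of measure $\ge c_n$ has $|F_\theta|\ge c_nr$. Among these I choose $\theta$ for which the one-dimensional integral is also at most $C r^{1-n}\int_{\omega_1}|f|$. Applying Lemma~\ref{one-d} on the chord of $B$ through $x$ in direction $\theta$ gives
\[
|f(x)|\le CM^{1-\vartheta}\Big(r^{-n}\int_{\omega_1}|f|\Big)^{\vartheta}\quad\forall x\in G.
\]
Repeating the same argument with $\omega_1$ replaced by $G$, I now use the trivial $L^\infty$ bound on $G$ in place of the integral. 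Every $z\in B$ has some direction whose chord meets $G$ in a set of length $\ge c_nr$, so a second application of Lemma~\ref{one-d} yields $\|f\|_{L^\infty(B)}\le CM^{1-\vartheta^2}(\int_{\omega_1}|f|)^{\vartheta^2}$.

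\emph{Global step (ball to $\omega$).} The same one-dimensional argument, applied with the small set being a full ball $B'$ and segments passing through $B'$, gives a three-ball inequality. If $B''$ is a ball of radius $r_0/2$ overlapping $B'$ in a set of measure $\ge c|B'|$, then $\|f\|_{L^\infty(B'')}\le CM^{1-\vartheta_0}\|f\|_{L^\infty(B')}^{\vartheta_0}$. Since $\overline\omega$ is compact and the $r_0$-neighborhood $\widetilde\omega$ is open and connected, $\overline\omega$ is covered by finitely many balls of radius $r_0/2$. Each of them can be reached from $B$ by a chain of overlapping balls inside $\widetilde\omega$, of length at most $N=N(\omega,\rho,|\omega_1|)$. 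Iterating at most $N$ times gives the claim with $\vartheta=\vartheta_0^N\vartheta^2$.

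I expect the main obstacle to be exactly this last point, namely making the chain length, and hence $C$ and $\vartheta$, depend only on $(\omega,\rho,|\omega_1|)$. In particular, the position of the density ball $B$ must be controlled. To handle this, I would fix once and for all a finite cover of $\overline\omega$ by balls of radius $r_0/2$. Some ball of this cover must contain $\omega_1$ with measure at least $|\omega_1|$ divided by the number of balls. I would then run the local step in that ball rather than at a density point, which makes all constants depend on $|\omega_1|$ only through this lower bound.
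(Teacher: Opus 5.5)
Your local step is correct, though roundabout. You could apply Chebyshev directly to $\omega_1'=\{y\in\omega_1:\ |f(y)|\le 2|\omega_1|^{-1}\int_{\omega_1}|f|\}$, which has $|\omega_1'|\ge|\omega_1|/2$, and then apply Lemma~\ref{one-d} once along a suitable ray from each point. That gives exponent $\vartheta$ with no weighted integrals.

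\textbf{The gap.} The genuine gap is your opening claim that $f$ extends analytically to $\widetilde\omega=\{x:\ \mathrm{dist}(x,\omega)<\rho/(2n)\}$, and your whole global step rests on it. The derivative bounds only show two things. First, the Taylor series $T_{x_0}$ at each $x_0\in\omega$ converges on a ball of radius $\rho/\sqrt n$. Second, it coincides with $f$ on the connected component of that ball intersected with $\omega$ that contains $x_0$. Two points of $\omega$ that are Euclidean-close but far apart inside $\omega$ can therefore have incompatible expansions, and then no single-valued extension exists.

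For instance, take $\omega\subset\{1<|x|<2\}\subset\mathbb{R}^2$ to be a smooth horseshoe whose two tips are at distance less than $\rho/(2n)$, and let $f$ be a continuous branch of $\arg x$ on it. This $f$ satisfies the hypothesis with uniform $M,\rho$, but its values at the two tips differ by almost $2\pi$. Your chains of balls in $\widetilde\omega$ cross exactly such gaps, so the three-ball iteration is unjustified. Chains would have to stay inside $\omega$, where admissible radii degenerate near $\partial\omega$. Bounding their number then requires geometric information about $\omega$ that your argument never uses. (The location of the density ball, which you flagged as the main difficulty, is handled fine by your finite-cover device.)

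\textbf{The statement needs a geometric restriction.} At the generality stated, no argument can close this gap, because the statement fails for some bounded domains. Let $\omega=\{(r\cos\Theta,r\sin\Theta):\ \Theta>0,\ r(\Theta+2\pi)<r<r(\Theta)\}$ with $r(\Theta)=1+(1+\Theta)^{-1}$. This is a bounded spiral strip winding infinitely often. Let $\Theta(x)$ be the continuous angle on $\omega$ and put $f_k=e^{-(\Theta-2\pi k)^2}$. Since $\partial^\alpha\Theta=\partial^\alpha\arg$ for $|\alpha|\ge1$, Cauchy estimates give the hypothesis with $M,\rho$ independent of $k$. Yet $\|f_k\|_{L^\infty(\omega)}=1$, while $\int_{\omega_1}|f_k|\to0$ for $\omega_1=\{0<\Theta<1\}$.

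What the paper actually needs is the ball case: in the proof of Lemma~\ref{analytic} the bounds hold on the whole ball $B_R(x_0)\supset\omega$. (The paper itself omits the proof of this lemma.) For a ball, or any convex $\omega$ of diameter $d$, your method closes without extension or chains:
\begin{enumerate}
\item Every ray from $z\in\omega$ stays in $\omega$ and has length $s\le d$.
\item The function $g(t)=f(z+t\theta)$ satisfies $|g^{(k)}|\le Mk!(s\rho')^{-k}$ with $\rho'=\min\{1,\rho/(nd)\}$.
\item Polar coordinates about $z$ give a direction with $|F_\theta|\ge|\omega_1'|/(|S^{n-1}|d^{n-1})$, where $F_\theta=\{t>0:\ z+t\theta\in\omega_1'\}$.
\item A single application of Lemma~\ref{one-d} then yields the claim with constants depending only on $d,n,\rho,|\omega_1|$.
\end{enumerate}
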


	Next, we use Lemmas~\ref{prop2}, \ref{analytic}, \ref{one-d} and \ref{multi-d}, to establish
	an interpolation inequality from measurable sets for any solution
	$u$ to Equation~\eqref{eq}. For similar results, we refer the reader to \cite{AEWZ} and \cite{EZ}.
	\begin{Lemma}\label{interpolation}
		Let $0\leq t_1< t_2\leq 1$, $\eta\in(0,1)$ and $\gamma>0$. Assume that $E\subset(t_1,t_2)$ is a measurable subset, with
		$|E\cap(t_1,t_2)|\geq \eta(t_2-t_1)$, and such that for each $t\in E$, the measurable subset $\mathcal{D}_t\subset\omega$ satisfies $|\mathcal{D}_t|\geq\gamma$. Then there are constants $C=C(\Omega,\omega,|\mu|,\eta,\gamma)\geq1$ and $\vartheta=\vartheta(\Omega,\omega,|\mu|,\eta,\gamma)$ with $\vartheta\in(0,1)$ such that  for any solution $u$ to Equation~\eqref{eq},
		\begin{equation}\label{16-8}
			\|u(\cdot,t_2)\|_{L^2(\Omega)}\leq \Big(\int_{t_1}^{t_2}\chi_{E}(t)\|u(\cdot,t)\|_{L^1(\mathcal{D}_t)}
			\d t\Big)^\vartheta
			\Big(e^{\frac{C}{(t_2-t_1)^6}}\|u(\cdot,t_1)\|
			_{L^2(\Omega)}\Big)^{1-\vartheta}.
		\end{equation}
	\end{Lemma}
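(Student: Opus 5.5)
We follow the scheme of \cite[Theorem 5]{AEWZ}, with the singularity handled by working only in $\omega$. We may assume $0\notin\overline{\omega}$: otherwise replace $\omega$ by a ball $\omega'\subset\omega$ away from the origin, where $|\mathcal{D}_t\cap\omega'|\ge \gamma/2$. Such a ball exists after shrinking $\omega$ by a set of measure $\gamma/2$ near $0$ and covering the rest by finitely many balls, one of which carries a fixed fraction of $\mathcal{D}_t$ for a subset of $E$ of measure $\ge \eta(t_2-t_1)/N$. Also, by linearity and the contraction property it suffices to normalize $\|u(\cdot,t_1)\|_{L^2(\Omega)}=1$ and shift time so that $t_1=0$, $\tau:=t_2-t_1\le1$.

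\textbf{Step 1 (analytic bounds on the time slab).} Apply Lemma~\ref{analytic} with $s=t_1$ on the interval $[t_1+\tau/4,t_2]$. This gives, for $x\in\omega$ and $t$ there,
\[
|\partial_x^\alpha\partial_t^\beta u(x,t)|\le C e^{C/\tau}|\alpha|!\,\beta!\,\rho^{-|\alpha|}(\tau/8)^{-\beta}.
\]
The set $E':=E\cap(t_1+\tau/4,t_2)$ may be nearly empty, so we first replace $\tau/4$ by $\eta\tau/2$. Then $|E'|\ge \eta\tau/2$, and the constant becomes $e^{C/(\eta\tau)}$ with $\beta$-radius comparable to $\eta\tau$.

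\textbf{Step 2 (propagation of smallness in space, then time).} For each $t\in E'$, Lemma~\ref{multi-d} with $M=Ce^{C/\tau}$ and $\omega_1=\mathcal{D}_t$ gives
\[
\|u(\cdot,t)\|_{L^\infty(\omega)}\le C M^{1-\vartheta_1}\|u(\cdot,t)\|_{L^1(\mathcal{D}_t)}^{\vartheta_1},
\]
with $\vartheta_1$ depending only on $\omega,\rho,\gamma$. Next fix $x\in\omega$. The function $g(t)=u(x,t)$ on $[t_1+\eta\tau/2,t_2]$, of length $s\sim\tau$, satisfies the hypothesis of Lemma~\ref{one-d} with $\rho$ depending on $\eta$, and $F=E'$ has $|F|/s\ge c(\eta)$. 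We apply it to $t\mapsto\|u(\cdot,t)\|_{L^\infty(\omega)}$ indirectly: for each $x$, $|u(x,t_2)|\le CM^{1-\vartheta_2}\big(\tfrac1{|F|}\int_F|u(x,t)|dt\big)^{\vartheta_2}$. Bounding $|u(x,t)|\le\|u(\cdot,t)\|_{L^\infty(\omega)}$ and using Hölder in $t$ (concavity of $r\mapsto r^{\vartheta_1}$, Jensen) yields
\[
\|u(\cdot,t_2)\|_{L^\infty(\omega)}\le C e^{C/\tau}\tau^{-1}\Big(\int_{t_1}^{t_2}\chi_E\|u(\cdot,t)\|_{L^1(\mathcal{D}_t)}dt\Big)^{\vartheta_1\vartheta_2}.
\]
The factor $\tau^{-1}$ is absorbed into $e^{C/\tau}$.

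\textbf{Step 3 (back to the whole domain).} Apply Lemma~\ref{prop2} to the adjoint-type backward solution on the interval $(t_2-\tau/2,t_2)$. Since $A_\mu$ is self-adjoint, the forward solution satisfies the same estimate after time reversal. We use it in the form $\|u(\cdot,t_2)\|^2\le Ce^{C/\tau^6}\int_{t_2-\tau/2}^{t_2}\|u\|^2_{L^2(\omega)}dt$. This is where the main obstacle appears: we need control at the \emph{final} time, not an integral over a window. The fix is to run Step 2 with $t_2$ replaced by every $t'\in(t_2-\tau/2,t_2)$. This works because $E\cap(t_1,t')$ still has measure $\ge \eta\tau/2$ once $\tau/2$ is replaced by $\eta\tau/2$ again. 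It bounds $\|u(\cdot,t')\|_{L^\infty(\omega)}$ uniformly, and then
\[
\|u(\cdot,t_2)\|\le Ce^{C/\tau^6}\Big(\int\chi_E\|u\|_{L^1(\mathcal{D}_t)}\Big)^{\vartheta}.
\]
Restoring the normalization by homogeneity gives $A\le B^{\vartheta}(e^{C/\tau^6}\|u(t_1)\|)^{1-\vartheta}$, after enlarging $C$ so that the prefactor is absorbed into the $(1-\vartheta)$-power. This is possible since the prefactor is $\ge1$ and $e^{C/\tau^6}$ dominates it after raising $C$ by the factor $1/(1-\vartheta)$.
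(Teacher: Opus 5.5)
Your proposal is correct and follows essentially the paper's proof: analytic bounds from Lemma~\ref{analytic} on a slab $[t_1+c\eta(t_2-t_1),t_2]$, then Lemma~\ref{one-d} in time and Lemma~\ref{multi-d} in space, and finally the cylinder observability of Lemma~\ref{prop2}. The only differences are cosmetic. You bound $\|u\|_{L^2(\omega\times\cdot)}$ directly by a uniform $L^\infty$ bound, where the paper uses the splitting $\|u\|_{L^\infty}^{1/2}\|u\|_{L^1}^{1/2}$; and the ``obstacle'' in your Step 3 is not real, because Lemma~\ref{one-d} already bounds $\sup_t|u(x,t)|$ over the whole slab, so there is no need to rerun Step 2 for each $t'$.
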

	\begin{proof}
		First, we define
		\begin{equation}\label{3-7}
			\tau=t_1+\frac{\eta}{10}(t_2-t_1)
		\end{equation}
		and
		\begin{equation*}
			F=E\cap(\tau,t_2).
		\end{equation*}
		It is clear that $|F|\geq \eta(t_2-t_1)/2$.  From Lemma~\ref{analytic}, noticing \eqref{3-7} we have that
		there are constants $C=C(\Omega,\omega,|\mu|,\eta)\geq1$ and $\rho=\rho(\Omega,\omega,
		|\mu|,\eta)\in(0,1]$ such that for all $t\in[\tau,t_2]$ and for all $x\in\omega$,
		\begin{equation}\label{16-1}
			|\partial_x^\alpha\partial_t^\beta u(x,t)|
			\leq \frac{e^{C/(t_2-t_1)}|\alpha|!\beta!}
			{\rho^{|\alpha|}\big(\eta(t_2-t_1)/20\big)^{\beta}}\|u(\cdot,t_1)
			\|_{L^2(\Omega)},\;\forall \alpha\in\mathbb{N}^n,\;\forall\beta\in 
			\mathbb{N}.
		\end{equation}
		Now, for convenience we write
		\begin{equation}\label{16-2}
			M=e^{\frac{C}{t_2-t_1}}\|u(\cdot,t_1)\|_{L^2(\Omega)}.
		\end{equation}
		Then for each $x\in\omega$, it holds that for all $\beta\in\mathbb{N}$
		\begin{equation*}
			\begin{split}
				|\partial_t^\beta u(x,t)|&\leq M\beta!\big[\eta(t_2-t_1)/20\big]
				^{-\beta}\\
				&\leq M\beta!\big[\eta(t_2-\tau)/20\big]^{-\beta},
				\;\;\forall t\in[\tau,t_2].
			\end{split}
		\end{equation*}
		Hence, it follows from Lemma~\ref{one-d} that there are constants
		$C=C(\eta)\geq1$ and $\vartheta=\vartheta(\eta)$ with $\vartheta\in(0,1)$ such that for any $x\in\omega$,
		\begin{equation} \label{16-5}
			\|u(x,\cdot)\|_{L^\infty(\tau,t_2)}\leq CM^{1-\vartheta}
			\Big(\frac{1}{|F|}\int_{F}|u(x,t)|\d t\Big)^{\vartheta}.
		\end{equation}

		On the other hand, we obtain from Lemma~\ref{prop2} that
		there exists a constant $C=C(\Omega,\omega)\geq1$ such that
		the following observability inequality holds
		\begin{equation}\label{3-9}
			\|u(\cdot,t_2)\|_{L^2(\Omega)}\leq e^{\frac{C}{(t_2-\tau)^6}}
			\|u\|_{L^2(\omega\times(\tau,t_2))}.
		\end{equation}
		By \eqref{16-1} and \eqref{16-2}, we have that
		\begin{equation*}
			\begin{split}
				\|u\|_{L^2(\omega\times(\tau,t_2))}&\leq \|u\|_{L^\infty(\omega\times(\tau,t_2))}^{1/2}
				\|u\|_{L^1(\omega\times(\tau,t_2))}^{1/2}\\
				&\leq e^{\frac{C}{t_2-t_1}}\|u(\cdot,t_1)\|_{L^2(\Omega)}^{1/2}
				\|u\|_{L^1(\omega\times(\tau,t_2))}^{1/2}.
			\end{split}
		\end{equation*}
		This, along with \eqref{3-9}, indicates that
		\begin{equation}\label{16-7}
			\|u(\cdot,t_2)\|_{L^2(\Omega)}\leq \|u\|_{L^1(\omega\times(\tau,t_2))}^{1/2}\Big(e^{\frac{C
				}{(t_2-t_1)^6}}\|u(\cdot,t_1)\|_{L^2(\Omega)}
			\Big)^{1/2}.
		\end{equation}
		By  \eqref{16-5}, it holds that
		\begin{equation*}
			\begin{split}
				\|u\|_{L^1(\omega\times(\tau,t_2))}&\leq (t_2-\tau)\int_{\omega}
				\|u(x,\cdot)\|_{L^\infty(\tau,t_2)}\d x\\
				&\leq CM^{1-\vartheta}\int_\omega\Big(\int_F|u(x,t)|\d t\Big)^{\vartheta}\d x.
			\end{split}
		\end{equation*}
		Thus, we  get from the H\"{o}lder inequality that
		\begin{equation}\label{16-6}
			\|u\|_{L^1(\omega\times(\tau,t_2))}\leq CM^{1-\vartheta}
			\Big(\int_F\int_\omega|u(x,t)|\d x\d t\Big)^\vartheta.
		\end{equation}
		Finally, by \eqref{16-1} again we have that for any $t\in F$,
		\begin{equation*}
			|\partial_x^\alpha u(x,t)|\leq M|\alpha|!\rho^{-|\alpha|},
			\;\;\forall x\in\omega.
		\end{equation*}
		Noticing that $|\mathcal{D}_t|\geq \gamma$ when $t\in F$, we obtain from Lemma~\ref{multi-d} that there are constants $C_1=C_1(\omega,\rho,\gamma)\geq1$ and $\vartheta_1=\vartheta_1(\omega,\rho,\gamma)$ with $\vartheta_1\in(0,1)$
		such that for any $t\in F$,
		\begin{equation*}
			\|u(\cdot,t)\|_{L^\infty(\omega)}\leq C_1M^{1-\vartheta_1}
			\Big(\int_{\mathcal{D}_t}|u(x,t)|\d x\Big)^{\vartheta_1}.
		\end{equation*}
		This, together with \eqref{16-6} and the H\"{o}lder inequality,
		implies that
		\begin{equation*}
			\begin{split}
				\|u\|_{L^1(\omega\times(\tau,t_2))}&\leq C M^{1-\vartheta}
				\Big(\int_F C_1M^{1-\vartheta_1}\big(\int_{\mathcal{D}_t}|u(x,t)|\d x\big)^{\vartheta_1}dt\Big)^{\vartheta}\\
				&\leq CC_1M^{1-\vartheta\vartheta_1}\Big(\int_F\int_{\mathcal{D}_t}|u(x,t)|\d x\d t\Big)^{\vartheta\vartheta_1}.
			\end{split}
		\end{equation*}
		This, combined with \eqref{16-2} and \eqref{16-7}, leads to the desired estimate \eqref{16-8} and completes the proof.
	\end{proof}

	\subsection{Proof of Theorem~\ref{obser-D}}\label{proof_of_observ}
	$\;\;\;\;$Using Fubini's theorem and a property of Lebesgue
	density points for measurable subsets, as well as the interpolation
	inequality established in Lemma~\ref{interpolation}, we can build up a telescoping series for any solution $u$ to Equation~\eqref{eq}. Then the desired observability inequality
	from measurable subsets follows directly from this telescoping series (see also \cite{PW2}, \cite{AEWZ} and \cite{EZ}).
	
	We begin by quoting from \cite[Proposition 2.1]{PW2} the following fact, which is a property of Lebesgue density points and an improved version of \cite[Lemma 2.1.5]{F1}.
	\begin{Lemma}\label{lebesgue-density}
		Let $E\subset(0,T)$ be a measurable subset of positive measure.
		Assume that $\ell\in(0,T)$ is a Lebesgue density point of $E$. Then for each $q\in(0,1)$, there exists a sequence $\{\ell_{m}\}_{m\geq1}\subset(0,T)$,
		which monotone decreasingly converges to $\ell$, such that for any $m\geq1$,
		\begin{equation}\label{16-9}
			\ell_{m+1}-\ell_{m+2}=q(\ell_m-\ell_{m+1})
		\end{equation}
		and
		\begin{equation}\label{16-10}
			|E\cap(\ell_{m+1},\ell_m)|\geq \frac{\ell_m-\ell_{m+1}}{3}.
		\end{equation}
	\end{Lemma}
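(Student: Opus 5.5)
The plan is to build the sequence by hand from the Lebesgue density property at $\ell$, choosing the first gap small enough. Fix $q\in(0,1)$. For a sequence satisfying \eqref{16-9}, all gaps are determined by the first one: if $\ell_1-\ell_2=d$, then $\ell_m-\ell_{m+1}=dq^{m-1}$. Requiring $\ell_m\to\ell$ forces
\[
\ell_m=\ell+\frac{dq^{m-1}}{1-q},\qquad m\geq1 .
\]
So the whole sequence is fixed by one parameter $\delta\triangleq d/(1-q)=\ell_1-\ell$, namely $\ell_m=\ell+\delta q^{m-1}$. This sequence decreases monotonically to $\ell$ and satisfies \eqref{16-9} automatically. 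It stays in $(0,T)$ as long as $\delta<T-\ell$.

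It then remains to choose $\delta$ so that \eqref{16-10} holds for every $m$. Each interval $(\ell_{m+1},\ell_m)$ equals $(\ell+\delta q^{m},\ell+\delta q^{m-1})$. It sits inside $(\ell,\ell+h_m)$ with $h_m=\delta q^{m-1}$, and its length is $(1-q)h_m$. Since $\ell$ is a density point, for every $\varepsilon>0$ there is $h_0>0$ such that
\[
|(\ell,\ell+h)\setminus E|\leq |(\ell-h,\ell+h)\setminus E|\leq 2\varepsilon h\qquad\text{for all } h\in(0,h_0).
\]
Hence
\[
|E\cap(\ell_{m+1},\ell_m)|\geq (1-q)h_m-2\varepsilon h_m .
\]
We choose $\varepsilon=(1-q)/3$. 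Then the right-hand side equals $(1-q)h_m/3=(\ell_m-\ell_{m+1})/3$, which is exactly \eqref{16-10}.

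Finally, we take $\delta<\min\{h_0,T-\ell\}$. Then $h_m\leq\delta<h_0$ for all $m$, so the estimate applies uniformly in $m$ and the proof is complete.

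The only delicate point is that the constant $1/3$ must be obtained uniformly over infinitely many intervals whose lengths shrink geometrically. This works because the density condition is applied at scales $h_m\to0$, which all lie below a single threshold $h_0$. Moreover, the ratio of each interval's length to its scale is the fixed number $1-q>0$, so a single choice of $\varepsilon$ depending only on $q$ suffices. We use the two-sided density only through the one-sided bound above, so no one-sided density assumption is needed.
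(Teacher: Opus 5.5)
Your proof is correct. It follows the standard argument for this fact. The paper gives no proof of its own and simply quotes the lemma from Phung--Wang (Proposition 2.1), where the sequence is likewise geometric, $\ell_{m+1}=\ell+z^{-m}(\ell_1-\ell)$ with $z=1/q$, and $\ell_1$ is chosen close to $\ell$ using the density of $E$ at $\ell$, exactly as you do.
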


	Now we are ready to give the proof of Theorem~\ref{obser-D} by the telescoping series method.

	\begin{proof}[\textbf{Proof of Theorem~\ref{obser-D}}]
		Firstly, we can assume, without loss of generality, that $T\in(0,1]$
		and there exists a subdomain $\omega\subset\Omega$ with $0\notin\overline{\omega}$ such that $\mathcal D\subset\omega\times(0,T)$. (Otherwise, we can choose a new measurable subset $\widetilde{\mathcal{D}}\subset\mathcal{D}$ with positive Lebesgue measure and satisfying $\widetilde{\mathcal{D}}\subset\omega_1\times(0,T)$
		with $0\notin\overline{\omega}_1$.) Next, we set for each $t\in(0,T)$ the space slice
		\begin{equation*}
			\mathcal{D}_t=\big\{x\in\omega;\,(x,t)\in\mathcal D\big\}
		\end{equation*}
		and define
		\begin{equation}\label{3-11}
			E=\Big\{
			t\in(0,T);\,|\mathcal{D}_t|\geq \frac{|\mathcal{D}|}{2T}\Big\}.
		\end{equation}
		From Fubini's theorem, we can see that $E$ is measurable with
		$|E|\geq |\mathcal{D}|/(2|\omega|)$ and that
		\begin{equation}\label{16-14}
			\chi_{E}(t)\chi_{\mathcal{D}_t}(x)\leq \chi_{\mathcal D}(x,t),\;\;
			\text{ for a.e.}\; \;(x,t)\in\Omega\times(0,T).
		\end{equation}
		Indeed,
		\begin{equation*}
			\begin{split}
				|\mathcal{D}|&=\int_{0}^T|\mathcal D_t|\d t=\int_E|\mathcal D_t|\d t+\int_{(0,T)\setminus E}|\mathcal D_t|\d t\\
				&\leq |E||\omega|+\frac{|\mathcal D|}{2T}T.
			\end{split}
		\end{equation*}

		Let $\ell\in(0,T)$ be a Lebesgue density point of $E$. Then for each $q\in(0,1)$, which is to be fixed later, by Lemma~\ref{lebesgue-density}, there exists a monotone decreasing
		sequence $\{\ell_m\}_{m\geq1}$ such that \eqref{16-9} and \eqref{16-10} hold, as well as  the following convergence
		\begin{equation}\label{16-11}
			\lim_{m\rightarrow +\infty}\ell_m=\ell.
		\end{equation}
		By \eqref{16-10} and \eqref{3-11}, we can obtain from Lemma~\ref{interpolation} that there are positive constants
		$C=C(\Omega,\omega,\mu,\mathcal D, T)$ and $\vartheta=
		\vartheta(\Omega,\omega,\mu,\mathcal D, T)$ with $\vartheta\in(0,1)$ such that for all $m\geq 1$,
		\begin{equation*}
			\|u(\cdot,\ell_m)\|_{L^2(\Omega)}
			\leq \Big(e^{\frac{C}{(\ell_m-\ell_{m+1})^6}}
			\int_{\ell_{m+1}}^{\ell_m}\chi_{E}\|u(\cdot,t)\|_{L^1(\mathcal D_t)}\d t\Big)^{\vartheta}
			\|u(\cdot,\ell_{m+1})\|_{L^2(\Omega)}^{1-\vartheta}.
		\end{equation*}
		From the Young inequality
		\begin{equation*}
			ab\leq \varepsilon a^p+\varepsilon^{-\frac{r}{p}}b^r,\;\;\text{for all}\;\;a>0, b>0, \varepsilon>0,
		\end{equation*}
		with
		\begin{equation*}
			\frac{1}{p} +\frac{1}{r}=1, p>1,r>1,
		\end{equation*}
		we get that for any $m\geq1$,
		\begin{equation*}
			\|u(\cdot,\ell_m)\|_{L^2(\Omega)}\leq \varepsilon\|u(\cdot,\ell_{m+1})\|_{L^2(\Omega)}
			+\varepsilon^{-\frac{1-\vartheta}{\vartheta}}
			e^{\frac{C}{(\ell_m-\ell_{m+1})^6}}
			\int_{\ell_{m+1}}^{\ell_m}\chi_{E}\|u(\cdot,t)\|_{L^1(\mathcal D_t)}\d t,\;\;\forall \varepsilon>0,
		\end{equation*}
		which is equivalent to that  for any $m\geq1$,
		\begin{equation}\label{16-12}
			\begin{split}
				\varepsilon^{1-\vartheta}e^{-\frac{C}{(\ell_m-\ell_{m+1})^6}}
				\|u(\cdot,\ell_{m})\|_{L^2(\Omega)}
				&-\varepsilon e^{-\frac{C}{(\ell_m-\ell_{m+1})^6}}
				\|u(\cdot,\ell_{m+1})\|_{L^2(\Omega)}\\
				&\leq \int_{\ell_{m+1}}^{\ell_m}\chi_{E}\|u(\cdot,t)\|_{L^1(\mathcal D_t)}\d t,\;\;\forall \varepsilon>0.
			\end{split}
		\end{equation}
		By letting $\varepsilon=e^{-1/(\ell_m-\ell_{m+1})^6}$ in \eqref{16-12}, we have that
		\begin{equation}\label{16-13}
			\begin{split}
				e^{-\frac{C+1-\vartheta}{(\ell_m-\ell_{m+1})^6}}\|u(\cdot,\ell_m)
				\|_{L^2(\Omega)}&-e^{-\frac{C+1}{(\ell_m-\ell_{m+1})^6}}
				\|u(\cdot,\ell_{m+1})\|_{L^2(\Omega)}\\
				&\leq
				\int_{\ell_{m+1}}^{\ell_m}\chi_{E}\|u(\cdot,t)\|_{L^1(\mathcal D_t)}\d t,\;\;\forall m\geq1.
			\end{split}
		\end{equation}
		Finally, we now fix
		\begin{equation*}
			q=\Big(\frac{C+1-\vartheta}{C+1}\Big)^{\frac{1}{6}}, \;\;\text{where $C$ and $\vartheta$
				are two constants given in}\;\;\eqref{16-13}.
		\end{equation*}
		Hence, it follows from \eqref{16-9} and \eqref{16-13} that
		\begin{equation*}
			\begin{split}
				e^{-\frac{C+1-\vartheta}{(\ell_m-\ell_{m+1})^6}}\|u(\cdot,\ell_m)
				\|_{L^2(\Omega)}&-e^{-\frac{C+1-\vartheta}{(\ell_{m+1}-\ell_{m+2})^6}}
				\|u(\cdot,\ell_{m+1})\|_{L^2(\Omega)}\\
				&\leq
				\int_{\ell_{m+1}}^{\ell_m}\chi_{E}\|u(\cdot,t)\|_{L^1(\mathcal D_t)}\d t,\;\;\forall m\geq1.
			\end{split}
		\end{equation*}
		Summing the inequality above from $m=1$ to $+\infty$ (which is called the telescoping series), and using \eqref{16-11} together with the fact that
		\begin{equation*}
			\sup_{t\in(0,T)}\|u(\cdot,t)\|_{L^2(\Omega)}<+\infty,
		\end{equation*}
		we conclude that
		\begin{equation*}
			\|u(\cdot,\ell_{1})\|_{L^2(\Omega)}\leq
			e^{\frac{C+1-\vartheta}{(\ell_1-\ell_2)^6}}
			\int_\ell^{\ell_1}\chi_{E}\|u(\cdot,t)\|_{L^1(\mathcal D_t)}\d t.
		\end{equation*}
		This, together with \eqref{16-14} and the energy decay property $\|u(\cdot,T)\|_{L^2(\Omega)}\leq \|u(\cdot,\ell_{1})\|_{L^2(\Omega)}$, leads to the desired observability inequality from measurable subsets and completes the proof.
	\end{proof}

	\section{Proof of Theorem~\ref{ITP}}\label{proof_of_bangbang}
	$\;\;\;\;$This section consists of two parts. In Subsection \ref{controllable}, we show the $L^\infty$-null controllability from measurable subsets for Equation~\eqref{eq}. In Subsection \ref{bangbang_and_uniqueness}, we use the above controllability to complete the proof of Theorem~\ref{ITP}.

	\subsection{Controllability from measurable sets}\label{controllable}
	$\;\;\;\;$Let us consider the following controlled equation:
	\begin{equation}\label{con-eq1}
		\begin{cases}
			\partial_t u-\Delta u-\frac{\mu}{|x|^2}u=\chi_{\mathcal D}f\;\;&\text{in}\;\;\Omega
			\times(0,T),\\
			u=0\;\;&\text{on}\;\;\partial\Omega\times(0,T),\\
			u(\cdot,0)=u_0\;\;&\text{in}\;\;\Omega,
		\end{cases}
	\end{equation} 
	with $\mu\leq\mu^*$ and $u_0\in L^2(\Omega)$.
	Here $f$ is a control function taken from $L^\infty(\Omega\times(0,T))$. Denote by $u(\cdot,\cdot;\chi_{\mathcal D}f)$ the solution of Equation~\eqref{con-eq1} corresponding to the control $f$.
	The $L^\infty$-null controllability problem is as follows: Given any $u_0\in L^2(\Omega)$, find a control function $f\in L^\infty(\Omega\times(0,T))$
	such that the solution of Equation~\eqref{con-eq1} satisfies
	\begin{equation*}
		u(x,T;\chi_{\mathcal D}f)=0,\;\;\text{for a.e.}\;\;x\in\Omega.
	\end{equation*}

	The $L^2$-null controllability for the controlled equation \eqref{con-eq1} with $\mathcal D=\omega\times(0,T)$, where $\omega\subset\Omega$ is a non-empty open subset,
	was studied in the recent works \cite{VZ1} and \cite{E}. They established that, when $\mu\leq\mu^*$, the equation \eqref{con-eq1} is null-controllable  with the control function $f\in L^2(\Omega\times(0,T))$  being active on the subset $\omega\times(0,T)$.
	
	In the present case, the control function belongs to
	$L^\infty(\Omega\times(0,T))$ and is only active on a subset $\mathcal D\subset\Omega\times(0,T)$ of positive Lebesgue measure. We state the main result of this subsection as follows.
	\begin{Corollary}\label{controllability}
		Let $\mu\leq \mu^*$. Then for any $u_0\in L^2(\Omega)$, there
		exists a control function $f\in L^\infty(\Omega\times(0,T))$,
		with
		\begin{equation*}
			\|f\|_{L^\infty(\Omega\times(0,T))}\leq C(\Omega,\mathcal D,T,\mu)
			\|u_0\|_{L^2(\Omega)},
		\end{equation*}
		such that the solution of Equation~\eqref{con-eq1} satisfies
		\begin{equation}\label{18-4}
			u(x,T;\chi_{\mathcal D}f)=0,\;\;\text{for a.e.}\;\;x\in\Omega.
		\end{equation}
		
	\end{Corollary}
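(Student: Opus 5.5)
The plan is the standard duality argument: convert the $L^1$-observability inequality of Theorem~\ref{obser-D} into $L^\infty$-null controllability by a Hahn--Banach extension, in the spirit of \cite{PW2,AEWZ}. First the observability is transferred to the adjoint (backward) equation. Since $A_\mu$ is self-adjoint, a solution $w$ of
\begin{equation*}
\partial_t w+\Delta w+\frac{\mu}{|x|^2}w=0 \;\text{ in }\;\Omega\times(0,T),\qquad w=0\;\text{ on }\;\partial\Omega\times(0,T),\qquad w(\cdot,T)=w_T,
\end{equation*}
is given by $w(\cdot,t)=\tilde u(\cdot,T-t)$, where $\tilde u$ solves Equation~\eqref{eq} with $\tilde u(\cdot,0)=w_T$. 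Theorem~\ref{obser-D}, applied to $\tilde u$ with the reflected set $\widetilde{\mathcal D}=\{(x,T-t):(x,t)\in\mathcal D\}$ (still of positive measure), then yields
\begin{equation*}
\|w(\cdot,0)\|_{L^2(\Omega)}\leq C\int_{\mathcal D}|w(x,t)|\d x\d t,\qquad \forall\, w_T\in L^2(\Omega),
\end{equation*}
with $C=C(\Omega,\mathcal D,\mu,T)$.

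Next I will define the subspace $X=\{\chi_{\mathcal D}w:\ w_T\in L^2(\Omega)\}\subset L^1(\Omega\times(0,T))$ and the linear functional
\begin{equation*}
\mathcal L(\chi_{\mathcal D}w)=-\int_\Omega u_0(x)w(x,0)\d x .
\end{equation*}
This functional is well defined: if $\chi_{\mathcal D}w=0$, the displayed inequality forces $w(\cdot,0)=0$. It is also bounded, since $|\mathcal L(\chi_{\mathcal D}w)|\leq C\|u_0\|_{L^2(\Omega)}\|\chi_{\mathcal D}w\|_{L^1}$. By the Hahn--Banach theorem, $\mathcal L$ extends to all of $L^1(\Omega\times(0,T))$ with the same norm. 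The Riesz representation $(L^1)^*=L^\infty$ then provides $f\in L^\infty(\Omega\times(0,T))$ with $\|f\|_{L^\infty}\leq C\|u_0\|_{L^2(\Omega)}$ and
\begin{equation*}
\int_0^T\!\!\int_\Omega \chi_{\mathcal D}f\,w\d x\d t=-\int_\Omega u_0\,w(\cdot,0)\d x\qquad\text{for all } w_T .
\end{equation*}
If needed, $f$ can be replaced by $\chi_{\mathcal D}f$ without changing either the identity or the norm bound.

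Finally, the duality identity for Equation~\eqref{con-eq1} reads
\begin{equation*}
\int_\Omega u(x,T)w_T(x)\d x-\int_\Omega u_0\,w(\cdot,0)\d x=\int_0^T\!\!\int_\Omega\chi_{\mathcal D}f\,w\d x\d t .
\end{equation*}
Combined with the previous identity, it gives $\int_\Omega u(T)w_T\d x=0$ for every $w_T\in L^2(\Omega)$, and hence \eqref{18-4}.

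The main obstacle is justifying this duality identity rigorously for the singular operator. I would first verify it for $u_0,w_T\in D(A_\mu)$ and smooth $f$, using the self-adjointness of $A_\mu$ in the $H_\mu(\Omega)$ framework of \cite{VZ} together with the mild-solution (semigroup) formula. The general case then follows by density, using the continuity of the semigroup on $L^2(\Omega)$ and the fact that $\chi_{\mathcal D}f\in L^2(\Omega\times(0,T))$. Everything else is functional-analytic bookkeeping.
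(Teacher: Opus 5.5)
Your proposal is correct and follows essentially the same route as the paper's proof: the $L^1$ observability of Theorem~\ref{obser-D} for the adjoint problem, the functional $\chi_{\mathcal D}w\mapsto-\int_\Omega u_0\,w(\cdot,0)\,\mathrm{d}x$ on $X$, a Hahn--Banach extension, the representation $(L^1)^*=L^\infty$, and the duality identity. Your extra details are points the paper leaves implicit: the time reversal $w(\cdot,t)=\tilde u(\cdot,T-t)$ with the reflected set $\widetilde{\mathcal D}$, the well-definedness of the functional, and the density justification of the duality identity.
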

	\begin{proof}
		The proof is somewhat by now standard; for the sake of completeness, we briefly give it here. Let us first define the set $X=\{\chi_{\mathcal D}w\}$ endowed with the usual norm $L^1(\Omega\times(0,T))$, where
		$w$ is any solution of the following equation
		\begin{equation}\label{dual-1}
			\begin{cases}
				\partial_tw+\Delta w+\frac{\mu}{|x|^2}w=0,\;\;&\text{in}\;\;
				\Omega\times(0,T),\\
				w=0,\;\;&\text{on}\;\;\partial\Omega\times(0,T),\\
				w(\cdot,T)=w_T,\;\;&\text{in}\;\;\Omega,
			\end{cases}
		\end{equation}
		with $w_T\in L^2(\Omega)$.
		Clearly, $X$ is a subspace of $L^1(\mathcal D)$.  Then we define the linear mapping $F: X\rightarrow \mathbb R$ by
		\begin{equation*}
			F(\chi_{\mathcal D}w)=-\int_{\Omega}u_0(x)w(x,0)\d x.
		\end{equation*}
		Applying Theorem~\ref{obser-D} to the solution $w$ of Equation~\eqref{dual-1}, we see that $F$ is bounded on $X$. From the Hahn-Banach theorem, there is a linear extension $\widetilde{F}:
		L^1(\mathcal D)\rightarrow \mathbb{R}$ of $F$ such that
		\begin{equation}\label{18-2}
			\widetilde{F}(\chi_{\mathcal D}w)=-\int_{\Omega}u_0(x)w(x,0)\d x,
			\;\;\forall\, \chi_{\mathcal D}w\in X
		\end{equation}
		and
		\begin{equation*}
			|\widetilde{F}(g)|\leq C\|u_0\|_{L^2(\Omega)}\|g\|_{L^1(\mathcal D)},\;\;\forall \,g\in L^1(\mathcal D).
		\end{equation*}
		Thus, $\widetilde{F}$ is a bounded functional on $L^1(\mathcal D)$. By duality, there is $f\in L^\infty(\mathcal D)$ with
		\begin{equation*}
			\|f\|_{L^\infty(\mathcal D)}\leq C\|u_0\|_{L^2(\Omega)},
		\end{equation*}
		such that
		\begin{equation}\label{18-1}
			\widetilde{F}(g)=\int_{\mathcal D}f(x,t)g(x,t)\d x\d t, \;\;\forall \,g\in L^1(\mathcal D).
		\end{equation}
		We extend $f$ over $\Omega\times(0,T)$ by setting it to be zero
		outside $\mathcal D$ and still denote the extended function by $f$. By integration by parts, we have that
		\begin{equation*}
			\int_{\Omega}u(x,T;\chi_{\mathcal D}f)w_{T}(x)\d x
			=\int_{\Omega}u_0(x)w(x,0)\d x+\int_{\Omega}f\chi_{\mathcal D}w\d x\d t,\;\;\forall w_T\in L^2(\Omega).
		\end{equation*}
		This, together with \eqref{18-2} and \eqref{18-1}, implies the equality \eqref{18-4}, and  completes the proof.
	\end{proof}

	\subsection{Bang-Bang property and uniqueness}\label{bangbang_and_uniqueness}
	
	$\;\;\;\;\,$Based on the energy decay property and Corollary~\ref{controllability}, we can conclude by the same argument as in \cite{AEWZ} that $(TP)^M$ has optimal controls. The proof of Theorem~\ref{ITP} is divided into two steps: we first establish the bang-bang property, and then use it to prove the uniqueness.\\
	\textbf{Step 1: Bang-Bang property.} Let $T^*\triangleq T^*(M)$ be the optimal time. Suppose $f^*$ is an optimal control and $u^*$ is the corresponding optimal state; then we have
	\begin{equation}
		\begin{cases}
			\partial_t u^* -\Delta u^*-\frac{\mu}{|x|^2}u^*=\chi_\omega f^*\;\;&\text{in}\;\;\Omega\times\mathbb{R}^+,\\
			u^*=0\;\;&\text{on}\;\;\partial\Omega\times\mathbb{R}^+,\\
			u^*(\cdot,0)=u_0\;\;&\text{in}\;\;\Omega,\\
			u^*(\cdot,T^*) = 0 & \text{in}\;\;\Omega.
		\end{cases}
		\label{time optimal state}
	\end{equation}
	
	By contradiction, we assume that there were $\varepsilon > 0$ and a subset $\widetilde{\mathcal{D}} \subset \omega\times(0,T^*)$ of positive measure such that
	\begin{equation}
		|f^*(x,t)| \leq M-\varepsilon\;\;\text{for a.e.}\;\;(x,t)\in\widetilde{\mathcal{D}}.
		\label{time optimal state.1}
	\end{equation}
	
	It suffices to find some $\delta\in(0,T^*)$ and a pair $(w,g)$ with $g\in\mathcal{U}^M$ such that
	\begin{equation}
		\begin{cases}
			\partial_t w -\Delta w-\frac{\mu}{|x|^2}w=\chi_\omega g\;\;&\text{in}\;\;\Omega
			\times(0,T^*-\delta),\\
			w=0\;\;&\text{on}\;\;\partial\Omega\times(0,T^*-\delta),\\
			w(\cdot,0)=u_0\;\;&\text{in}\;\;\Omega,\\
			w(\cdot,T^*-\delta) = 0 & \text{in } \Omega.
		\end{cases}
		\label{new time optimal state}
	\end{equation}\\
	The existence of such a triple $(\delta,w,g)$ clearly contradicts the optimality of $T^*$ and thus implies the bang-bang property of $f^*$.

	To seek such a triple, we first observe that there is a $\delta_0 > 0$ such that $\mathcal{D}=\widetilde{\mathcal{D}}\cap (\omega \times(\delta_0,T^*))$ has positive measure. Then, it follows from \eqref{time optimal state.1} that
	\begin{equation}
		|f^*(x,t)| \leq M-\varepsilon\;\;\text{for a.e.}\;\;(x,t)\in\mathcal{D}.
		\label{eq:3.2.6.1}
	\end{equation}
	Let $\delta\in(0,\delta_0)$, which will be determined later. By solving the equation
	\begin{equation}
		\begin{cases}
			\partial_t \varphi  -\Delta \varphi-\frac{\mu}{|x|^2}\varphi=0\;\;&\text{in}\;\;\Omega
			\times(\delta,\delta_0),\\
			\varphi=0\;\;&\text{on}\;\;\partial\Omega\times(\delta,\delta_0),\\
			\varphi(\cdot,\delta)=u_0-u^*(\cdot,\delta)\;\;&\text{in}\;\;\Omega.
		\end{cases}
		\label{delta-delta_0}
	\end{equation}
	we get that
	\begin{equation}
		\left \| \varphi (\cdot,\delta_0)  \right \| _{L^2(\Omega) } \le e^{C_1(\delta_0-\delta)}\left \| \varphi (\cdot,\delta)  \right \| _{L^2(\Omega) } \le e^{C_1\delta_0}\left \| \varphi(\cdot,\delta)  \right \| _{L^2(\Omega)}.
		\label{delta-delta_0.1}
	\end{equation}
	where $C_1=C_1(\Omega,\mu) > 0$ is independent on $\delta$.
	
	Next, by Corollary~\ref{controllability}, there exists a control $v \in L^\infty(\Omega\times(\delta_0,T^*))$ such that the solution $\psi$ to the equation
	\begin{equation}
		\begin{cases}
			\partial_t \psi -\Delta \psi-\frac{\mu}{|x|^2}\psi=\chi_{\mathcal{D}}v\;\;&\text{in}\;\;\Omega \times(\delta_0, T^*),\\
			\psi=0\;\;&\text{on}\;\;\partial\Omega\times(\delta_0, T^*),\\
			\psi(\cdot,\delta_0)= \varphi (\cdot,\delta_0)  \;\;&\text{in}\;\;\Omega,
		\end{cases}
		\label{delta_0-T}
	\end{equation}
	satisfies $\psi(\cdot,T^*) = 0$ in $\Omega$. Furthermore, it holds that
	\begin{equation*}
		\|v\|_{L^\infty(\Omega\times(\delta_0,T^*))}\leq C \|\varphi (\cdot ,\delta_0)\|_{L^2(\Omega)}.
	\end{equation*}
	for some $C=C(\Omega,\mathcal{D},\delta_0,T^*\mu) >0 $ independent on $\delta$. Combining the above estimate with \eqref{delta-delta_0.1} we get that
	\begin{equation*}
		\|v\|_{L^\infty(\Omega\times(\delta_0,T^*))}\leq \eta\left \| u_0-u^*(\cdot,\delta)  \right \| _{L^2(\Omega)}.
	\end{equation*}
	where $\eta=Ce^{C_1\delta_0}$ is independent on $\delta$.
	
	Moreover, due to the regularity of $u^*$, we have $\lim_{\delta\to 0}\left \| u_0-u^*(\cdot,\delta)  \right \| _{L^2(\Omega)}=0$, so we can take $\delta$ sufficiently small such that $\left \| u_0-u^*(\cdot,\delta)  \right \| _{L^2(\Omega)} \leq \frac{\varepsilon}{\eta}$ and thus
	\begin{equation}
		\|v\|_{L^\infty(\Omega\times(\delta_0,T^*))}\leq \eta\left \| u_0-u^*(\cdot,\delta)  \right \| _{L^2(\Omega)} \leq \varepsilon.
		\label{v epsilon}
	\end{equation}
	
	We define
	\begin{equation*}
		w_\delta(x,t)=
		\begin{cases}
			\varphi(x,t) + u^*(x,t)\quad\quad\text{when}\; x\in\Omega,\,t\in[\delta,\delta_0),\\
			\psi(x,t) + u^*(x,t)\quad\quad\text{when}\; x\in\Omega,\,t\in[\delta_0,T^*].
		\end{cases}
	\end{equation*}
	\begin{equation*}
		g_\delta(x,t)=f^*(x,t)+\chi_{\mathcal{D}}v(x,t)\quad\quad\text{when}\; x\in\Omega,\,t\in[\delta,T^*].
	\end{equation*}
	On one hand, due to \eqref{time optimal state}, \eqref{delta-delta_0} and \eqref{delta_0-T}, $w_\delta$ solves the following equation
	\begin{equation}\label{w_delta}
		\begin{cases}
			\partial_t w_\delta -\Delta w_\delta-\frac{\mu}{|x|^2}w_\delta=\chi_\omega g_\delta\;\;&\text{in}\;\;\Omega
			\times(\delta,T^*),\\
			w_\delta=0\;\;&\text{on}\;\;\partial\Omega\times(\delta,T^*),\\
			w_\delta(\cdot,\delta)=u_0\;\;&\text{in}\;\;\Omega,\\
			w_\delta(\cdot,T^*) = 0 & \text{in } \Omega.
		\end{cases}
	\end{equation}
	On the other hand, due to (\ref{eq:3.2.6.1}), (\ref{v epsilon}) and the fact $f^*\in\mathcal{U}^M$, we have that
	\begin{equation}\label{g_delta}
		\|g_\delta\|_{L^\infty(\Omega\times(\delta,T^*))}\leq M
	\end{equation}
	
	Finally, we take
	\begin{equation}\label{wg}
		w(x,t)=	w_\delta(x,t+\delta),\quad
		g(x,t)= g_\delta(x,t+\delta)\quad\text{when}\; x\in\Omega,\,t\in[0,T^*-\delta].
	\end{equation}
	Combing \eqref{w_delta}--\eqref{wg} we get that $g\in\mathcal{U}^M$ and $w$ solves Equation \eqref{new time optimal state}, which contradicts the optimality of $T^*$ and thus implies the bang-bang property of $f^*$.\\\\
	\textbf{Step 2: Uniqueness.} Suppose $f_1^*$ and $f_2^*$ are both time optimal controls of $(TP)^M$, with their optimal time $T^*$. Due to step 1, they both have the bang-bang property
	\begin{equation}
		|f^*_1(x,t)|=M\quad\text{and}\quad|f^*_2(x,t)|=M\quad\text{a.e.}\,\,(x,t)\in\omega\times(0,T^*).
		\label{1 2 bang}
	\end{equation} 
	We claim that
	\begin{equation}
		f^*_1(x,t)=f^*_2(x,t),\quad \text{a.e.}\,\,(x,t)\in\omega\times(0,T^*).
		\label{1=2}
	\end{equation}
	Otherwise, we can find an $\varepsilon>0$ and a set of positive  measure $\mathcal{Q}\subset\omega\times(0,T^*)$ such that
	\begin{equation}
		| f^*_1(x,t)-f^*_2(x,t) | \geq \varepsilon,\quad\text{a.e.}\,\,(x,t)\in\mathcal{Q}.
		\label{1 n= 2}
	\end{equation}
	Note that $\bar{f}=\frac{1}{2}(f^*_1+f^*_2)$ is also an optimal control of the $(TP)^M$ problem, thus it also has the bang-bang property
	\begin{equation*}
		|\bar{f}(x,t)|=M\quad\text{a.e.}\,\,(x,t)\in\omega\times(0,T^*).
	\end{equation*}
	However, combining (\ref{1 2 bang}), (\ref{1 n= 2}) and using the parallelogram law, we get that
	\begin{equation*}
		\begin{aligned}
			|f^*_1(x,t)+f^*_2(x,t)|^2&=2|f^*_1(x,t)|^2+2|f^*_2(x,t)|^2-|f^*_1(x,t)-f^*_2(x,t)|^2\\
			&\leq 4M-\varepsilon^2\quad\quad\text{a.e.}\,\,(x,t)\in\mathcal{Q}.
		\end{aligned}
	\end{equation*}
	Then we have
	\begin{equation*}
		|\bar{f}(x,t)|=\frac{1}{2}|f^*_1(x,t)+f^*_2(x,t)|=\frac{1}{2}(4M-\varepsilon^2)^{1/2}<M\quad\text{a.e.}\,\,(x,t)\in\mathcal{Q},
	\end{equation*}
	which contradicts the bang-bang property of $\bar{f}$. Thus we prove the claim (\ref{1=2}) and complete the proof.

	\section{Conclusion}\label{conclusion}
	$\;\;\;\;$This paper studies the time optimal control problem for the heat equation with singular inverse-square potentials. Our main results are twofold. First, we establish an observability inequality from time-space measurable subsets of positive measure for the singular heat equation. Second, we show that the time optimal control problem has a unique optimal control, and this control satisfies the bang-bang property.
	
	We end up with some remarks on the limitations of this work and possible future directions.
	\begin{itemize}
		\item All analysis in this study is restricted to the case where the singular point $x=0$ is isolated and the potential is radially symmetric. Extensions to multiple singularities or more general singular potentials remain open.
		\item The bang-bang property is proved for time optimal controls with the control region inside the domain; the corresponding problem for norm optimal controls or for the boundary control region is also of interest.
		\item It would be interesting to investigate whether similar observability inequalities and bang-bang properties hold for other singular parabolic equations, for example, with potentials involving the distance to the boundary or with time-dependent singularities.
		\item The current proof uses a telescoping series method that requires the underlying semigroup to be analytic and the solution to be real-analytic away from the singularity; exploring whether these techniques can be adapted to degenerate parabolic equations or to fractional diffusion equations is a promising direction.
	\end{itemize}
	
	\section*{Appendix. Proof of Lemma 2.2}
	
	$\;\;\;\;$The theory of Carleman estimates is a powerful tool proposed by Carleman \cite{Carl} to prove unique continuation for a two-dimensional elliptic equation. It is also a standard tool in controllability problems. So far, many works based on Carleman estimates have been published for various PDEs (see, e.g., \cite{BZZ,E,FLZ}).

	Without loss of generality, we can assume that $\overline{B}_1(0)\subset\Omega$ and that $\overline{B}_1(0)\cap\overline{\omega}=\emptyset$. Otherwise, we can take a subset $\omega_1\subset\omega$ such that $\overline{B}_r(0)\cap\overline{\omega_1}=\emptyset$ for some $r>0$, and then use a scaling argument.
	
	Let us introduce the following weight function
	\begin{equation}\label{pei-1}
		\sigma(x,t)=s\theta(t)\Big(e^{2\lambda\sup\psi}-\frac{1}{2}|x|^2
		-e^{\lambda\psi(x)}\Big),\;\;(x,t)\in\Omega\times(0,T),
	\end{equation}
	where $\lambda$ and $s$ are two parameters large enough,
	\begin{equation}\label{theta}
		\theta(t)=\Big(\frac{1}{t(T-t)}\Big)^3,\;\;t\in(0,T),
	\end{equation}
	and $\psi$ is a smooth function satisfying
	\begin{equation*}
		\begin{cases}
			\psi(x)=\ln(|x|),\;\;&x\in B_1(0),\\
			\psi(x)=0,\;\;&x\in\partial\Omega,\\
			\psi(x)>0,\;\;&x\in\Omega\setminus\overline{B}_1(0),\\
			|\nabla\psi(x)|>\delta,\;\;&x\in\Omega\setminus
			\overline{\omega_0},
		\end{cases}
	\end{equation*}
	for an open subset $\omega_0\subset\omega$ and some $\delta>0$.
	We refer the reader to \cite{E} for the construction of $\psi$ and  the choice of the weight function $\sigma$.  Clearly, when $\lambda>0$ is large enough, it holds that
	\begin{equation*}
		\begin{cases}
			\sigma(x,t)>0,\;\;(x,t)\in\Omega\times(0,T),\\
			\lim_{t\rightarrow0^+}\sigma(x,t)=+\infty,\;\lim_{t\rightarrow T^-}\sigma(x,t)=+\infty,\;\;x\in\Omega.
		\end{cases}
	\end{equation*}
	Write $\phi(x)=e^{\lambda\psi(x)}$. Blow, we present the proof of Lemma~\ref{prop2}.

	First, we introduce the following lemma which is an immediate consequence of \cite[Theorem 2.1]{E}.
	\begin{LemmaA}
		There are positive constants $C=C(\Omega,\omega)$ and $\lambda_0
		=\lambda_0(\Omega,\omega)$ such that for each $\lambda\geq\lambda_0$, there is $s_0=s_0(\Omega,\omega,\lambda)$ such that for all
		$s\geq s_0$ and for all $T\in(0,1]$, any solution $w$ to the equation
		\begin{equation*}
			\begin{cases}
				\partial_tw+\Delta w+\frac{\mu}{|x|^2}w=0\;\;&\text{in}\;\;\Omega\times(0,T),\\
				w=0\;\;&\text{on}\;\;\partial\Omega\times(0,T),\\
				w(\cdot,T)=w_T\;\;&\text{in}\;\;\Omega,
			\end{cases}
		\end{equation*}
		where $\mu\leq\mu^*$ and $w_T\in L^2(\Omega)$, satisfies the  weighted estimate
		\begin{multline}\label{carleman estimate}
			s\int_{\Omega\times(0,T)}\theta e^{-2\sigma}\frac{w^2}{|x|}\d x\d t\leq C\Big(s\lambda^2\int_{\omega_0\times(0,T)}\theta\phi e^{-2\sigma}|\nabla w|^2\d x\d t\\
			+s^3\lambda^4\int_{\omega_0\times(0,T)}\theta^3\phi^3e^{-2\sigma}
			w^2\d x\d t\Big).
		\end{multline}
	\end{LemmaA}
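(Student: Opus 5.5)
The plan is to take the Carleman estimate of \cite[Theorem 2.1]{E} and follow its proof while recording the time dependence. The only change relative to \cite{E} is the time weight: \eqref{theta} uses $\theta=(t(T-t))^{-3}$ instead of the lower power used there. I would check that, for this $\theta$, every constant in \cite{E} can be chosen independently of $T\in(0,1]$.

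\textbf{Conjugation.} Set $z=e^{-\sigma}w$ with $\sigma$ as in \eqref{pei-1}. Since $\sigma\to+\infty$ as $t\to0^+$ and $t\to T^-$, $z$ vanishes at both ends of $(0,T)$, and $z=0$ on $\partial\Omega$. I would write
\[
e^{-\sigma}\Big(\partial_t+\Delta+\frac{\mu}{|x|^2}\Big)(e^{\sigma}z)=P_1z+P_2z,
\]
where
\[
P_1z=\Delta z+\frac{\mu}{|x|^2}z+|\nabla\sigma|^2z+\partial_t\sigma\, z,\qquad P_2z=\partial_tz+2\nabla\sigma\cdot\nabla z+\Delta\sigma\, z.
\]
Then $P_1z+P_2z=0$, so $2\int P_1z\,P_2z\le 0$. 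I would expand this cross term as in \cite{E}. The spatial part of $\sigma$ is $s\theta\big(e^{2\lambda\sup\psi}-\tfrac12|x|^2-e^{\lambda\psi}\big)$, and on $B_1(0)$ we have $\psi=\ln|x|$, so there $e^{\lambda\psi}=|x|^\lambda$. The interaction of the singular term $\mu|x|^{-2}$ with $\nabla\sigma$ is exactly the one treated in \cite{E}. The $-\tfrac12|x|^2$ part makes the Hessian term $D^2\sigma$ contribute $-s\theta|\nabla z|^2$ near $0$. I would absorb this using the improved Hardy inequality
\[
\int|\nabla z|^2-\mu^*\int\frac{z^2}{|x|^2}\ge c\int\frac{z^2}{|x|},
\]
valid on bounded domains, which I take from \cite{E}. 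This is what produces the left-hand side $s\int\theta e^{-2\sigma}w^2/|x|$, uniformly in $\mu\le\mu^*$. Away from $B_1(0)$, the condition $|\nabla\psi|>\delta$ on $\Omega\setminus\overline{\omega_0}$ gives the usual dominant terms $s\lambda^2\theta\phi|\nabla z|^2$ and $s^3\lambda^4\theta^3\phi^3z^2$. On $\omega_0$ these terms cannot be controlled by the left side and are moved to the right; this produces the observation integrals in \eqref{carleman estimate}.

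\textbf{Time-derivative terms.} For $\theta=(t(T-t))^{-3}$ one has
\[
|\theta'|\le C\,T\,\theta^{4/3},\qquad |\theta''|\le C\,T^2\,\theta^{5/3},
\]
and $\theta\ge C T^{-6}\ge C$. The terms containing $\partial_t\sigma$ and $\partial_t^2\sigma$ are at most of size $s\theta^{4/3}|\cdot|$ and $s\theta^{5/3}|\cdot|$ times $z^2$. Since $T\le1$, these are dominated by $s^3\lambda^4\theta^3\phi^3z^2$ as soon as $s\ge s_0(\Omega,\omega,\lambda)$, with $s_0$ independent of $T$. The cross terms such as $\partial_t\sigma\,|\nabla\sigma|^2$ are handled the same way. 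This is the point where the cube in $\theta$ is used, and it is why Lemma~\ref{prop2} ends up with $e^{C/T^6}$.

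\textbf{Main obstacle and conclusion.} I expect the hardest step to be the region near the singularity in the critical case $\mu=\mu^*$. There the positive terms degenerate, and the Hardy remainder $\int z^2/|x|$ is the only coercive quantity left. So I would check carefully that every lower-order error term near $0$ is $O(s\theta |x|^{-1}z^2)$ with a small constant, or is of higher order in $s\theta$ with a bounded coefficient. Given the weight choice of \cite{E}, I expect these bounds to hold. Finally, I would return from $z$ to $w$: since $\nabla w=e^{\sigma}(\nabla z+z\nabla\sigma)$, the estimate for $z$ transfers to $w$ with the weights $e^{-2\sigma}$, at the cost of harmless extra terms on $\omega_0$. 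This gives \eqref{carleman estimate}.
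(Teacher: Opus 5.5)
Your plan is the paper's. The paper also defers to the proof of \cite[Theorem 2.1]{E}, and notes only that $T$ enters through the bounds $|\theta\theta'|\le C\theta^3$, $|\theta'|\le C\theta^3$, $|\theta''|\le C\theta^{5/3}$, whose constants are uniform for $T\in(0,1]$. Your estimates $|\theta'|\le CT\theta^{4/3}$ and $|\theta''|\le CT^2\theta^{5/3}$, together with $\theta\ge 64T^{-6}$, imply these, so $s_0$ is indeed independent of $T$. One factual correction: the weight is not a modification. The paper takes $\sigma$ (and hence $\theta$) as in \cite{E}; only the bookkeeping of constants changes.

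However, your account of the time terms has one step that fails as written.

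\emph{The absorption step near the origin.} The term $-\iint\partial_t^2\sigma\,z^2$ has size about $s|\theta''|\,e^{2\lambda\sup\psi}z^2$ near $x=0$, because the spatial factor of $\sigma$ does not vanish there. By contrast, $\phi^3=|x|^{3\lambda}$ does vanish at $x=0$. So no choice of $s$ makes $s^3\lambda^4\theta^3\phi^3z^2$ dominate this term. Near $0$ the coercive zero-order quantities are the Hardy remainder $s\theta z^2/|x|$ and the term $s^3\theta^3|x|^2z^2$. The latter comes from $-4\iint D^2\sigma(\nabla\sigma,\nabla\sigma)z^2$ with the $-\tfrac12|x|^2$ part of $\sigma$. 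One absorbs $s\theta''z^2$ by interpolating between them:
$\theta^{5/3}=(\theta|x|^{-1})^{2/3}(\theta^3|x|^2)^{1/3}\le\varepsilon\,\theta|x|^{-1}+\varepsilon^{-2}\theta^3|x|^2$.

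\emph{Where the cube is used.} The interpolation above is exactly why $|\theta''|\le C\theta^{5/3}$, and hence the cube, is needed. Away from the origin, the domination you describe works for every power $(t(T-t))^{-k}$ with $k\ge1$, so that is not where the cube enters. Since your constant is $CT^2\le C$, the resulting threshold on $s$ is again independent of $T$.

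\emph{The sign of the $-\tfrac12|x|^2$ contribution.} This part of $\sigma$ does not create a bad term $-s\theta|\nabla z|^2$ to be absorbed. With your $P_1,P_2$ it contributes $+4s\iint\theta\big(|\nabla z|^2-\mu z^2/|x|^2\big)$ to $2\langle P_1z,P_2z\rangle$. The improved Hardy inequality bounds this below by $c\,s\iint\theta z^2/|x|$, and that is what produces the left-hand side of the estimate.

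With the singular region handled this way, as in \cite{E}, your argument coincides with the paper's.
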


	\begin{proof}[\textbf{Proof}]
		The detailed proof of this lemma is implicitly contained in the proof of \cite[Theorem 2.1]{E} and is omitted here.
		Comparing with the statement of \cite[Theorem 2.1]{E}, we emphasize that the positive constant $s_0$ could be uniformly chosen when the time interval $(0,T)$ is bounded.
		We just point out the following minor modifications at the beginning of the proof of \cite[Lemma 2.8]{E}:  Recalling the function
		$\theta(t)$ given by \eqref{theta}, we can obtain that there is a generic constant $C\geq1$ (independent of $T$) such that for any $T\in(0,1]$,
		$$
		|\theta\theta'|\leq C\theta^3,\;|\theta'|\leq C\theta^3,\;
		|\theta''|\leq C\theta^{\frac{5}{3}}.
		$$
	\end{proof}

	Then, by using the above lemma, we have
	
	\begin{proof}[\textbf{Proof of Lemma~\ref{prop2}}]
		By fixing the constants $\lambda\geq\lambda_0$ and $s\geq s_0$
		in the Carleman  estimate \eqref{carleman estimate}, we get that
		there is a constant $C=C(\Omega,\omega)\geq1$ such that for any $T\in(0,1]$,
		\begin{equation}\label{o-1}
			\int_{\Omega\times(0,T)}\theta e^{-2\sigma}\frac{w^2}{|x|}\d x\d t
			\leq C\Big(\int_{\omega_0\times(0,T)}\theta\phi e^{-2\sigma}|\nabla w|^2\d x\d t
			+\int_{\omega_0\times(0,T)}\theta^3\phi^3e^{-2\sigma}w^2\d x\d t
			\Big).
		\end{equation}
		Recalling the definitions of $\theta$ and $\sigma$ given by
		\eqref{pei-1} and \eqref{theta}, we observe  that
		\begin{equation*}
			\theta e^{-2\sigma}\frac{1}{|x|}\geq C\theta e^{-C\theta},\;\;(x,t)\in\Omega\times(0,T),
		\end{equation*}
		for some positive constant $C=C(\Omega,\omega)$.
		For each $t\in[T/4,3T/4]$, it holds that
		$$t^{-1}(T-t)^{-1}\in\Big[\frac{4}{T^2},\frac{16}{3T^2}\Big].$$
		Thus, we have that
		\begin{equation}\label{o-2}
			\theta e^{-2\sigma}\frac{1}{|x|}\geq \frac{C}{T^6}e^{-\frac{C}{T^6}},\;\;\text{when}\;\;(x,t)
			\in\Omega\times\big[\frac{T}{4},\frac{3T}{4}\big].
		\end{equation}
		On the other hand, there exists a positive constant $C=C(\Omega,\omega)$ such that when $(x,t)\in\omega_0\times(0,T)$,
		\begin{equation}\label{o-3}
			\theta\phi e^{-2\sigma}\leq  Ce^{-\theta}
		\end{equation}
		and
		\begin{equation}\label{o-4}
			\theta^3\phi^3e^{-2\sigma}\leq C.
		\end{equation}
		Hence, it follows from \eqref{o-1}, \eqref{o-2}--\eqref{o-4} that
		there is $C=C(\Omega,\omega)\geq1$ such that
		\begin{equation*}
			\frac{C}{T^6}e^{-\frac{C}{T^6}}\int_{\Omega\times\big[
				\frac{T}{4},\frac{3T}{4}\big]}w^2\d x\d t\leq
			C\Big(\int_{\omega_0\times(0,T)}e^{-\sigma}|\nabla w|^2\d x\d t
			+\int_{\omega_0\times(0,T)}w^2\d x\d t\Big).
		\end{equation*}
		This, together with a version of  Caccioppoli's inequality (see,
		for instance, \cite[Lemma III]{VZ1})
		\begin{equation*}
			\int_{\omega_0\times(0,T)}e^{-\sigma}|\nabla w|^2\d x\d t
			\leq C\int_{\omega\times(0,T)}w^2 \d x\d t,
		\end{equation*}
		with some positive constant $C=C(\Omega,\omega)$, leads to that
		\begin{equation*}
			\int_{\frac{T}{4}}^{\frac{3}{4}T}\int_{\Omega}w^2\d x\d t
			\leq Ce^{\frac{C}{T^6}}\int_0^T\int_{\omega}w^2\d x\d t.
		\end{equation*}
		This, along with the energy decay property $\|w(\cdot,0)\|_{L^2(\Omega)}\leq \|w(\cdot,t)\|_{L^2(\Omega)},\;\forall t\in[0,T]$, implies the desired observability inequality \eqref{independence},
		and completes the proof.
	\end{proof}

\end{document}